\newtheorem{dfn}{Definition}[section]
\newtheorem{lem}[dfn]{Lemma}
\newtheorem{thm}[dfn]{Theorem}
\theoremstyle{definition}
\newtheorem{asm}[dfn]{Assumption}
\newtheorem{exm}{Example}[section]
\newtheorem{rem}[dfn]{Remark}
\title{Weak Convergence for Variational Inequalities with Inertial-Type Method}
\date{\today}
\author{Yekini
Shehu\footnote{Department of Mathematics, Zhejiang Normal University, Jinhua,
321004, People's Republic of China; Institute of Science
and Technology (IST), Am Campus 1, 3400, Klosterneuburg, Austria; e-mail: yekini.shehu@unn.edu.ng. }
\hspace*{0.8mm}
Olaniyi. S. Iyiola\footnote{Department of Mathematics, Computer Science and Information Systems, California University of Pennsylvania, PA, USA; e-mail:niyi4oau@gmail.com.}}
\begin{document}

\maketitle

\begin{abstract}
\noindent Weak convergence of inertial iterative method for solving variational inequalities is the focus of this paper. The cost function is assumed to be non-Lipschitz and monotone. We propose a projection-type method with inertial terms and give weak convergence analysis under appropriate conditions. Some test results are performed and compared with relevant methods in the literature to show the efficiency and advantages given by our proposed methods.
\end{abstract}

\section{Introduction}\label{Sec:Intro}

\noindent
Suppose $C$ is a nonempty, closed and convex subset of a real Hilbert space $H$ and $F:C\rightarrow H$ a continuous mapping. The variational inequality problem (for short,
VI($F,C$)) is defined as: find $x\in C$ such that
\begin{eqnarray}\label{bami2}
   \langle F(x), y-x\rangle \geq 0, \quad \forall y \in C.
\end{eqnarray}
We shall denote by $ \text{SOL} $ the solution set of VI($F,C$) in \eqref{bami2}. Various applications of variational inequality can be found in \cite{Aubin,Baiocchi,Fichera,Fichera2,Glowinski,Khobotov,Kinderlehrer,Konnov,Marcotte}. \\

\noindent Projection-type method for solving VI($F,C$) \eqref{bami2} have been considered severally in the literature (see, for example, \cite{Ceng,CengYao1,Censor1,Dong,Hieu,MaingePE1,Malitsky3,Malitsky,Nadezhkina,Popov,Tseng}). Several other related methods to extragradient method and \eqref{pppp3} for solving VI($F,C$) \eqref{bami2} in real Hilbert spaces when $F$ is monotone and $L$-Lipschitz-continuous mapping have been studied in the literature (see, for example, \cite{Ceng,CengYao1,Censor1,Dong,Hieu,Korpelevich,MaingePE1,Malitsky3,Malitsky,Nadezhkina,Tseng}). Some of these methods involve  computing projection onto the feasible set $C$ twice per iteration and this can affect the efficiency of the methods.\\

\noindent
In \cite{Censor2}, Censor et al. introduced the subgradient extragradient method: $x_1\in H$,
\begin{eqnarray}\label{pppp3}
\left\{  \begin{array}{llll}
         & y_n = P_C(x_n-\lambda F(x_n)),\\
         & T_n:=\{w \in H:\langle x_n-\lambda F(x_n)-y_n,w-y_n\rangle \leq 0\},\\
         & x_{n+1}=P_{T_n}(x_n-\lambda F(y_n))
         \end{array}
         \right.
\end{eqnarray}
and gave weak convergence result when $F$ is monotone and $L$-Lipschitz-continuous mapping where $\lambda \in (0,\frac{1}{L})$. \\

\noindent
In order to accelerate the convergence of subgradient extragradient method \eqref{pppp3} and using the idea of in
\cite{Alvarez,Attouch,Attouch2,Attouch3,AttouchPey,Beck,Bot2,Bot3,CChen,Lorenz,Mainge2,Ochs,Polyak2},
Thong and Hieu \cite{Thong} introduced the following inertial subgradient extragradient method: $x_0, x_1\in H$,
\begin{eqnarray}\label{mm2}
\left\{  \begin{array}{llll}
         & w_n=x_n+\alpha_n(x_n-x_{n-1}),\\
         & y_n = P_C(w_n-\lambda F(w_n)),\\
         & T_n:=\{w \in H:\langle w_n-\lambda F(w_n)-y_n,w-y_n\rangle \leq 0\},\\
         & x_{n+1}=P_{T_n}(w_n-\lambda F(y_n))
         \end{array}
         \right.
\end{eqnarray}
and proved that $\{x_n\}$ generated by \eqref{mm2} converges weakly
to a solution of VI($F,C$) \eqref{bami2} when $F$ is monotone and $L$-Lipschitz-continuous mapping $ F $ where $0<
\lambda L\leq \frac{\frac{1}{2}-2\alpha-\frac{1}{2}\alpha^2-\delta}{\frac{1}{2}-\alpha+\frac{1}{2}\alpha^2}$ for some
$0<\delta<\frac{1}{2}-2\alpha-\frac{1}{2}\alpha^2$ and $\{\alpha_n\}$ is a non-decreasing sequence with $0\leq \alpha_n \leq \alpha<\sqrt{5}-2$.\\

\noindent The step-sizes in above methods \eqref{pppp3} and \eqref{mm2} are bounded by the inverse of the Lipschitz constant and this is quite inefficient, since in most applications a global Lipschitz constant (if it indeed exists at all) of $F$ cannot be accurately estimated, and is usually overestimated. This leads to too small step-sizes, which, of course, is not practical. Therefore, algorithms \eqref{pppp3} and \eqref{mm2} are not applicable in most cases of interest. This can be overcome by using an Armijo type line search procedure (see \cite{Khobotov,Marcotte,Solodov}).\\

\noindent  We provide a simple example of a variational inequality problem where the method \eqref{pppp3}
 proposed in \cite{Censor2} and method \eqref{mm2} proposed in \cite{Thong} cannot be applied.

\begin{exm}\label{aje}
Suppose $F:[0,\infty)\rightarrow \mathbb{R}$ is defined by $F(x):=e^x,~x \in [0,\infty)$. It is easy to see that $F$ is not Lipschitz continuous on $[0,\infty)$. By the mean value theorem, one has for an arbitrary $r>0$,
$$
|F(x)-F(y)|\leq e^r|x-y|
$$
 \noindent with $|x|,|y|\leq r$.
Hence, $F$ is uniformly continuous on bounded subsets of $C:=[0,\infty)$. Consequently, one can easily see that $F$ is monotone on $[0,\infty)$
since
$$
\langle F(x)-F(y),x-y\rangle=(F(x)-F(y))(x-y)\geq 0,~~\forall x,y \in [0,\infty).
$$
 \noindent Finally, $\text{SOL}$ of VI$(F,C)$ is nonempty since $0 \in \text{SOL}$.

\end{exm}
\noindent Motivated by Example \ref{aje}, it would be of interest to propose an iterative method for solving VI($F,C$) \eqref{bami2} for which the underline cost function $F$ is uniformly continuous on bounded subsets of $C$ but not Lipschitz continuous on $C$.\\

\noindent
Our interest in this paper is to obtain weak convergence results using inertial projection-type algorithm for VI($F,C$) \eqref{bami2} when the underline operator $F$ is monotone and uniformly continuous. We do not assume the cost function to be Lipschitz continuous as assumed in \cite{Censor1,Censor2,Dong,MaingePE1,Malitsky3,Thong}. Our proposed method is much more practical and outperforms the methods \eqref{pppp3} and \eqref{mm2} numerically.\\

\noindent
We organize the paper as follows: Basic
definitions and results are given in Section~\ref{Sec:Prelims} and the proposed method is introduced in
Section~\ref{Sec:Method}. We give weak convergence analysis of
the proposed method in Section~\ref{Sec:Convergence} and give some numerical comparisons of our method with methods \eqref{pppp3} and \eqref{mm2} in Section~\ref{Sec:Numerics}. Finally, we some concluding remarks in Section~\ref{Sec:Final}.

\section{Preliminaries}\label{Sec:Prelims}
\noindent
Suppose we take $ H $ as a real Hilbert space and $ X \subseteq H $ be a nonempty subset.
\begin{dfn}\label{Def:LipMon}
A mapping $ F: X \to H $
is called
\begin{itemize}
\item[{\rm(a)}] {\em monotone} on $X$ if $\langle
F(x) - F(y), x-y\rangle\geq0$ for all $x,y\in X$;
\item[{\rm(b)}] {\em Lipschitz continuous} on $X$ if there exists a
      constant $L>0$ such that
      $$\|F(x) - F(y)\|\leq L\|x-y\|,\ \forall x,y\in X.$$
\item[{\rm(c)}] {\em sequentially weakly continuous} if for each sequence $\{x_n\}$ we have:  $\{x_n\}$ converges weakly to $x$ implies
$\{F(x_n)\}$ converges weakly to $F(x)$.

\end{itemize}
\end{dfn}

\noindent Given any point $u \in H$, there exists a unique point $P_C u \in C$ (see, e.g., \cite{Bauschkebook}) such that
$$\|u-P_C u\|\leq\|u-y\|,~\forall y \in C.$$
This $P_C$ is called the {\it metric projection } of $H$ onto $C$. It is known that $P_C$ is a nonexpansive mapping of $H$ onto $C$ and satisfies
\begin{equation}\label{a21}
   \langle x-y, P_C x-P_C y \rangle \geq \|P_C x-P_C y\|^2,~~\forall x, y \in H.
\end{equation}
In particular, we get from \eqref{a21} that
\begin{equation}\label{load}
   \langle x-y, x-P_C y \rangle \geq \|x-P_C y\|^2,~~\forall x \in C, y \in H.
\end{equation}
Another property of $P_C x$ is :
\begin{equation}\label{a22}
   P_Cx\in C\quad \text{and} \quad \langle x-P_C x,P_C x-y\rangle\geq0,~\forall y\in C.
\end{equation}
More details on $P_C$ can be found, for example, in Section 3 of \cite{Goebel}.\\

\noindent
The following results are needed in the next section.

\begin{lem}\label{lm2}
The following statements hold in $ H $:
\begin{itemize}
   \item[(a)] $ \|x+y\|^2=\|x\|^2+2\langle x,y\rangle+\|y\|^2 $ for all
      $ x, y \in H $;
   \item[(b)] $ 2 \langle x-y, x-z \rangle = \| x-y \|^2 + \| x-z \|^2 -
      \| y-z \|^2 $ for all $ x,y,z \in H $;
   \item[(c)] $\|\alpha x+(1-\alpha)y\|^2 =\alpha \|x\|^2+(1-\alpha)\|y\|^2
   -\alpha(1-\alpha)\|x-y\|^2$ for all $x,y \in H$ and $\alpha \in \mathbb{R}$.

\end{itemize}
\end{lem}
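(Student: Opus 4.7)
The plan is to derive all three identities directly from the defining relation $\|v\|^2 = \langle v,v\rangle$ together with bilinearity and symmetry of the inner product; no deeper structural facts about $H$ are needed.

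For part (a), I would simply expand
\[
\|x+y\|^2 = \langle x+y,\,x+y\rangle = \langle x,x\rangle + \langle x,y\rangle + \langle y,x\rangle + \langle y,y\rangle,
\]
and then use symmetry $\langle x,y\rangle = \langle y,x\rangle$ to collect the cross term as $2\langle x,y\rangle$.

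For part (b), the cleanest route is to write $x-y = (x-z)+(z-y)$ and apply (a), which yields
\[
\|x-y\|^2 = \|x-z\|^2 + 2\langle x-z,\,z-y\rangle + \|z-y\|^2.
\]
Then I would rewrite $\langle x-z,\,z-y\rangle = \langle x-z,\,(x-y)-(x-z)\rangle = \langle x-z,x-y\rangle - \|x-z\|^2$ and, by symmetry of the inner product, re-express the resulting $\langle x-z,x-y\rangle$ as $\langle x-y,x-z\rangle$. Solving the resulting linear relation for $2\langle x-y,x-z\rangle$ yields the stated identity. (An alternative is to expand both sides in terms of $\langle x,x\rangle,\langle y,y\rangle,\langle z,z\rangle,\langle x,y\rangle,\langle x,z\rangle,\langle y,z\rangle$ and compare — this is purely mechanical.)

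For part (c), I would again expand using (a) applied to $\alpha x$ and $(1-\alpha)y$:
\[
\|\alpha x+(1-\alpha)y\|^2 = \alpha^2\|x\|^2 + 2\alpha(1-\alpha)\langle x,y\rangle + (1-\alpha)^2\|y\|^2.
\]
The key algebraic trick is to replace $2\langle x,y\rangle$ by $\|x\|^2+\|y\|^2-\|x-y\|^2$ (which is just (a) applied to $x+(-y)$), giving
\[
2\alpha(1-\alpha)\langle x,y\rangle = \alpha(1-\alpha)\bigl(\|x\|^2+\|y\|^2\bigr) - \alpha(1-\alpha)\|x-y\|^2.
\]
Substituting and collecting the coefficients of $\|x\|^2$ and $\|y\|^2$ — namely $\alpha^2+\alpha(1-\alpha)=\alpha$ and $(1-\alpha)^2+\alpha(1-\alpha)=1-\alpha$ — produces the claimed formula.

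There is no real obstacle here: all three statements are routine consequences of bilinearity, symmetry, and the identification $\|\cdot\|^2=\langle\cdot,\cdot\rangle$. The only thing to be mildly careful about in (c) is that the identity is asserted for all real $\alpha$, not just $\alpha\in[0,1]$, so one must not implicitly assume positivity of $\alpha$ or $1-\alpha$ anywhere in the expansion; fortunately the manipulation above is purely algebraic and goes through for any $\alpha\in\mathbb{R}$.
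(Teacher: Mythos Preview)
Your argument is correct in every part. Note, however, that the paper does not actually supply a proof of this lemma: it is listed among the preliminaries as a standard fact and immediately followed by the next lemma, so there is no ``paper's own proof'' to compare against. Your direct expansion via bilinearity and symmetry of the inner product is exactly the routine verification one would expect for these elementary identities.
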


\begin{lem}\label{la5}(see \cite[Lem.\ 3]{alvarez}) Let $\{ \psi_n\}$, $\{ \delta_n\}$ and $\{ \alpha_n\}$ be the sequences in $[0,+\infty)$ such that $\psi_{n+1}\leq \psi_n+\alpha_n(\psi_n-\psi_{n-1})+\delta_n$ for all $n\geq1$, $\sum_{n=1}^{\infty}\delta_n<+\infty$ and there exists a real number $\alpha$ with $0\leq\alpha_n\leq\alpha<1$ for all $n\geq1$. Then the following hold:\\
$(i)~~\sum_{n\geq1}[\psi_n-\psi_{n-1}]_+<+\infty$, where $[t]_+=\max\{ t,0\}$;\\
(ii) there exists $\psi^*\in[0,+\infty)$ such that $\lim_{n\rightarrow+\infty}\psi_n=\psi^*$.
\end{lem}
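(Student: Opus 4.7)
The plan is to reduce the recursion to one on the positive parts $\theta_n := [\psi_n - \psi_{n-1}]_+$ and exploit the strict contraction factor $\alpha < 1$ to telescope.

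First I would establish the key one-step bound $\theta_{n+1} \leq \alpha\,\theta_n + \delta_n$. The trick is to argue that $\alpha_n(\psi_n - \psi_{n-1}) \leq \alpha\,\theta_n$ in both sign cases: when $\psi_n \geq \psi_{n-1}$ this follows from $\alpha_n \leq \alpha$, and when $\psi_n < \psi_{n-1}$ the left-hand side is nonpositive while the right-hand side is $0$. Substituting into the hypothesis gives $\psi_{n+1} - \psi_n \leq \alpha\,\theta_n + \delta_n$, and since the right side is nonnegative, taking positive parts preserves the inequality.

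For part (i), I would iterate this contraction to obtain, for each $n$,
\[
\theta_{n+1} \leq \alpha^{n}\theta_1 + \sum_{k=1}^{n}\alpha^{n-k}\delta_k,
\]
then sum over $n$, swap the order of summation in the Fubini-style double sum, and bound each geometric tail by $1/(1-\alpha)$. This yields
\[
\sum_{n=1}^{\infty}\theta_n \;\leq\; \frac{\theta_1}{1-\alpha} + \frac{1}{1-\alpha}\sum_{k=1}^{\infty}\delta_k \;<\; +\infty,
\]
which is exactly (i).

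For part (ii), I would write $\psi_n - \psi_{n-1} = \theta_n - \eta_n$ where $\eta_n := [\psi_n - \psi_{n-1}]_-$ is the negative part, so that after telescoping
\[
\psi_n = \psi_0 + \sum_{k=1}^{n}\theta_k - \sum_{k=1}^{n}\eta_k.
\]
The nonnegativity $\psi_n \geq 0$ forces $\sum_{k=1}^{n}\eta_k \leq \psi_0 + \sum_{k=1}^{n}\theta_k$, and since $\{\eta_k\}$ is nonnegative the partial sums are monotone and bounded above by (i), hence convergent. Therefore $\psi_n$ is the sum of two convergent series, and its limit $\psi^* \in [0,+\infty)$ exists. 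The only step requiring a small subtlety is the case analysis in the first paragraph; once $\theta_{n+1} \leq \alpha\,\theta_n + \delta_n$ is in hand, everything else is a routine geometric-series computation and a telescoping argument.
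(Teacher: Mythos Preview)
Your proof is correct. The paper does not actually prove this lemma; it is quoted without proof from \cite[Lem.~3]{alvarez}, so there is no in-paper argument to compare against. Your approach---reducing to the recursion $\theta_{n+1}\le \alpha\,\theta_n+\delta_n$ on positive parts via the two-case bound, summing the iterated geometric estimate for (i), and then controlling the negative parts by nonnegativity of $\psi_n$ for (ii)---is exactly the standard proof in the cited source, and every step is sound.
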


\begin{lem}\label{super}(see \cite[Lem.\ 2.39]{Bauschkebook})
Let $C$ be a nonempty set of $H$ and $\{ x_n\}$ be a sequence in $H$ such that the following two conditions hold:\\
(i) for any $x\in C$, $\lim_{n\rightarrow\infty}\| x_n-x\|$ exists;\\
(ii) every sequential weak cluster point of $\{ x_n\}$ is in $C$.\\
Then $\{ x_n\}$ converges weakly to a point in $C$.
\end{lem}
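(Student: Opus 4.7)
The plan is to apply the standard Opial-type argument: establish boundedness of $\{x_n\}$, show that the sequential weak cluster point is unique, and then use the fact that in a Hilbert space a bounded sequence with a unique weak cluster point converges weakly.

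First I would use hypothesis (i): pick any $x\in C$ (which exists since $C$ is nonempty); then $\{\|x_n-x\|\}$ converges and is therefore bounded, so $\{x_n\}$ is bounded in $H$. Since $H$ is a Hilbert space, hence reflexive, every bounded sequence admits a weakly convergent subsequence, so at least one sequential weak cluster point exists, and by (ii) it belongs to $C$.

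The main step is uniqueness. Suppose for contradiction that $\{x_n\}$ has two sequential weak cluster points $p,q\in C$ with $p\neq q$, realized along subsequences $x_{n_k}\rightharpoonup p$ and $x_{m_j}\rightharpoonup q$. Expanding via Lemma \ref{lm2}(a),
\[
\|x_n-p\|^2 = \|x_n-q\|^2 + 2\langle x_n-q,\, q-p\rangle + \|q-p\|^2.
\]
By (i) the two norm-squared sequences on the two sides of this identity converge, so $\lim_n \langle x_n-q,\, q-p\rangle$ exists. Evaluating along the two subsequences yields
\[
\lim_k \langle x_{n_k}-q,\, q-p\rangle = \langle p-q,\, q-p\rangle = -\|p-q\|^2,
\]
\[
\lim_j \langle x_{m_j}-q,\, q-p\rangle = \langle q-q,\, q-p\rangle = 0.
\]
Since the full limit must coincide with each subsequential limit, $\|p-q\|^2=0$, contradicting $p\neq q$. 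Hence $\{x_n\}$ has exactly one sequential weak cluster point, say $p^\ast\in C$.

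Finally, I would conclude weak convergence of the entire sequence to $p^\ast$. If $\{x_n\}$ did not converge weakly to $p^\ast$, there would exist $\varphi\in H$ and $\varepsilon>0$ and a subsequence $\{x_{\ell_k}\}$ with $|\langle x_{\ell_k}-p^\ast,\varphi\rangle|\geq\varepsilon$ for all $k$. As $\{x_{\ell_k}\}$ is bounded, by reflexivity it has a further weakly convergent sub-subsequence, whose limit is a weak cluster point of $\{x_n\}$ and therefore equals $p^\ast$, contradicting the lower bound. The only real subtlety in the argument is this last passage from uniqueness of cluster points to convergence of the whole sequence, and it relies essentially on the weak sequential compactness of bounded sets in the Hilbert space $H$.
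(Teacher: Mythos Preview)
Your argument is correct and is exactly the classical Opial-type proof of this result. Note, however, that the paper does not supply its own proof of this lemma: it is stated as a preliminary fact with a citation to \cite[Lem.~2.39]{Bauschkebook}, so there is nothing in the paper to compare against beyond observing that your proof is the standard one found in that reference.
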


\begin{lem}\label{new1} (\cite{He2})
Let $C$ be a nonempty closed and convex subset of $H$. Let $h$ be a real-valued function on $H$ and define
$K:=\{x: h(x)\leq 0\}$. If $K$ is nonempty and $h$ is
Lipschitz continuous on $C$ with modulus $\theta>0$, then
$${\rm dist}(x,K) \geq\theta^{-1}\max \{h(x),0\},~\forall x \in C,$$
\noindent where ${\rm dist}(x,K)$ denotes the distance function from
$x$ to $K$.
\end{lem}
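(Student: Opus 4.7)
The plan is to split on the sign of $h(x)$ and exploit the Lipschitz inequality against arbitrary points of $K$. If $h(x)\le 0$, then $\max\{h(x),0\}=0$, so the right-hand side is zero and the bound $\mathrm{dist}(x,K)\ge 0$ is automatic. So the substantive case is $h(x)>0$.

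In this case, I would fix an arbitrary $y\in K$. By definition of $K$, $h(y)\le 0$, and hence $h(x)-h(y)\ge h(x)>0$. The Lipschitz bound gives $|h(x)-h(y)|\le \theta\,\|x-y\|$, and combining these two estimates yields
\[
h(x)\;\le\; h(x)-h(y) \;\le\; \theta\,\|x-y\|,
\]
so that $\|x-y\|\ge \theta^{-1}h(x)$. Taking the infimum over all such $y\in K$ produces $\mathrm{dist}(x,K)\ge \theta^{-1}h(x)=\theta^{-1}\max\{h(x),0\}$, which is what we need.

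The only delicate step is the middle one: the Lipschitz inequality in the hypothesis is stated on $C$, whereas the infimum defining $\mathrm{dist}(x,K)$ ranges over all of $K$. In the context of the cited reference the bound is used in a regime where the relevant $y$'s lie in $C$ (either because one reduces to $K\cap C$ or because the construction of $K$ from an affine/linear $h$ forces global Lipschitz continuity with the same modulus), so the passage above goes through. Apart from this bookkeeping, the argument is a one-line consequence of the Lipschitz inequality and the definition of $K$, so there is no genuine obstacle — the lemma is really a direct error-bound estimate of Hoffman type specialised to a single inequality.
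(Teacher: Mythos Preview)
The paper does not supply its own proof of this lemma; it is simply quoted from \cite{He2}. Your argument is the standard one and is correct: the case $h(x)\le 0$ is trivial, and for $h(x)>0$ one uses $h(y)\le 0$ for $y\in K$ together with the Lipschitz bound to get $h(x)\le h(x)-h(y)\le \theta\|x-y\|$, then takes the infimum over $y\in K$.

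The domain issue you flag is genuine as the lemma is literally stated here --- the Lipschitz hypothesis is on $C$ while the infimum ranges over $K$, and nothing forces $K\subseteq C$. In the paper's only application (inequality \eqref{jjj7}), the function $h_n(x)=\langle F(y_n),x-y_n\rangle$ is affine and hence globally Lipschitz on all of $H$ with modulus $\|F(y_n)\|$, so the difficulty evaporates exactly as you anticipated. Your proof is complete for that setting; the lemma as transcribed is slightly sloppy, but there is nothing more to supply.
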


\begin{lem}\label{new3}
Let $C$ be a nonempty closed and convex subset of $H$, $y:=P_C(x)$ and $x^* \in C$. Then
\begin{equation}\label{SpCseEst}
    \|y-x^*\|^2\leq\|x-x^*\|^2-\|x-y\|^2.
\end{equation}
\end{lem}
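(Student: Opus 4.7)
\medskip

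The plan is to derive the inequality directly from the two tools already recorded earlier in the excerpt: the variational characterization of the metric projection in \eqref{a22} and the polarization identity in Lemma \ref{lm2}(b). This is a standard firm-nonexpansiveness-style estimate, so I do not expect any genuine obstacle; the only thing to be careful about is matching the signs when applying \eqref{a22}.

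First, since $y = P_C(x)$ and $x^* \in C$, I would apply \eqref{a22} with the point $x$ in the role of ``$x$'' of that display and $x^*$ in the role of ``$y$,'' obtaining
\begin{equation*}
\langle x - y,\, y - x^{*}\rangle \;\geq\; 0.
\end{equation*}
Equivalently, $\langle y - x,\, y - x^{*}\rangle \leq 0$. This is the one substantive input; everything else is algebra in the Hilbert space.

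Next, I would invoke Lemma \ref{lm2}(b) with the triple $(y, x, x^{*})$ to write
\begin{equation*}
2\langle y - x,\, y - x^{*}\rangle \;=\; \|y - x\|^{2} + \|y - x^{*}\|^{2} - \|x - x^{*}\|^{2}.
\end{equation*}
Rearranging for $\|y - x^{*}\|^{2}$ gives
\begin{equation*}
\|y - x^{*}\|^{2} \;=\; \|x - x^{*}\|^{2} - \|x - y\|^{2} + 2\langle y - x,\, y - x^{*}\rangle.
\end{equation*}

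Finally, inserting the inequality $\langle y - x, y - x^{*}\rangle \leq 0$ from the first step drops the last term and yields exactly
\begin{equation*}
\|y - x^{*}\|^{2} \;\leq\; \|x - x^{*}\|^{2} - \|x - y\|^{2},
\end{equation*}
which is \eqref{SpCseEst}. No case analysis or extra hypotheses on $F$ are needed, and the argument uses only closedness and convexity of $C$ (already assumed) to guarantee that $P_C$ is well-defined and satisfies \eqref{a22}.
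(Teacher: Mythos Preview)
Your argument is correct: the projection inequality \eqref{a22} gives $\langle y-x, y-x^*\rangle \leq 0$, and combining this with the polarization identity of Lemma~\ref{lm2}(b) yields \eqref{SpCseEst} exactly as you wrote. The paper itself states Lemma~\ref{new3} without proof, treating it as a standard preliminary fact, so there is no alternative argument to compare against; your derivation is the natural one and uses only tools already recorded in Section~\ref{Sec:Prelims}.
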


\begin{lem}\label{lm28a} (\cite[Prop.\ 2.11]{Iusem1}, \cite[Prop.\ 4]{Iusem3})
Let $H_1$ and $H_2$ be two real Hilbert spaces. Suppose
$F:H_1\rightarrow H_2$ is uniformly continuous on bounded subsets of $H_1$
and $M$ is a bounded subset of $H_1$. Then $F(M)$ is bounded.
\end{lem}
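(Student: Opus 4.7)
The plan is to exploit uniform continuity of $F$ on a single bounded set containing $M$, and then chain together small increments along the radial segment from $0$ to each $x \in M$ to obtain a uniform bound on $\|F(x)\|$.

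First I would pick $r>0$ with $M \subseteq B := \{u \in H_1 : \|u\|\leq r\}$, which is possible by boundedness of $M$. The ball $B$ is itself bounded, so by hypothesis $F$ is uniformly continuous on $B$. Applying the definition of uniform continuity on $B$ with $\varepsilon = 1$ produces $\delta > 0$ such that $u, v \in B$ with $\|u-v\| < \delta$ satisfy $\|F(u)-F(v)\| < 1$.

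Next I would fix an integer $N$ with $N > r/\delta$; the crucial point is that $N$ depends only on $r$ and $\delta$, not on any particular $x$. For each $x \in M$ I form the chain $x_k := (k/N)\, x$ for $k = 0, 1, \ldots, N$. Each $x_k$ lies in $B$ because $\|x_k\| \leq \|x\| \leq r$, and consecutive points satisfy $\|x_k - x_{k-1}\| = \|x\|/N \leq r/N < \delta$. The uniform continuity estimate then gives $\|F(x_k) - F(x_{k-1})\| < 1$ for every $k$, and telescoping yields
\[
\|F(x) - F(0)\| \;\leq\; \sum_{k=1}^N \|F(x_k) - F(x_{k-1})\| \;<\; N.
\]
Hence $\|F(x)\| \leq \|F(0)\| + N$ for all $x \in M$, which is the required uniform bound.

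The essentially only subtlety is arranging the chain so that its length $N$ is independent of $x \in M$; this hinges on boundedness of $M$ (providing a single radius $r$) together with uniform continuity on the whole ball $B$ (providing a single modulus $\delta$). Merely pointwise continuity on $H_1$ would not suffice, since the modulus could shrink as $x$ varies over $M$ and then the chain length could grow without control; uniform continuity on bounded sets is exactly what prevents this.
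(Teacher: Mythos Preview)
Your argument is correct. The paper does not actually prove this lemma; it merely records it with citations to \cite[Prop.~2.11]{Iusem1} and \cite[Prop.~4]{Iusem3}, so there is no in-paper proof to compare against. The radial chaining you use---fixing a single ball $B$ containing $M$, extracting one modulus $\delta$ from uniform continuity on $B$, and telescoping along $N>r/\delta$ equally spaced points of the segment $[0,x]$---is the standard elementary proof of this fact and is exactly the kind of argument those references give.
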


\begin{lem}\label{lm27a}(\cite[Lem.\ 7.1.7]{wtak})
Let $C$ be a nonempty, closed, and convex subset of $H$.
Let $F:C\rightarrow H$ be a continuous, monotone mapping and $z \in C$. Then
$$
   z \in {\rm SOL} \Longleftrightarrow \langle F(x), x-z\rangle
   \geq 0 \quad \text{for all } x\in C.
$$
\end{lem}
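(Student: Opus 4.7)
The plan is to prove this classical Minty-type equivalence by handling the two implications separately, with the harder direction requiring the standard convex-combination argument.

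For the forward direction ($\Rightarrow$), I would assume $z \in \text{SOL}$ and fix an arbitrary $x \in C$. The monotonicity assumption on $F$ gives $\langle F(x) - F(z), x - z \rangle \geq 0$, so $\langle F(x), x-z\rangle \geq \langle F(z), x-z\rangle$. Since $z \in \text{SOL}$ and $x \in C$, the right-hand side is $\geq 0$, and the desired inequality follows. This direction uses only monotonicity, not continuity or convexity of $C$.

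For the reverse direction ($\Leftarrow$), I would use the standard Minty trick. Assume $\langle F(x), x-z\rangle \geq 0$ for all $x \in C$. Fix any $y \in C$ and any $t \in (0,1]$, and set $x_t := (1-t)z + ty$. Convexity of $C$ ensures $x_t \in C$, so by hypothesis $\langle F(x_t), x_t - z\rangle \geq 0$. But $x_t - z = t(y-z)$, so dividing by $t>0$ yields $\langle F(x_t), y-z\rangle \geq 0$. Letting $t \to 0^+$, continuity of $F$ at $z$ gives $F(x_t) \to F(z)$ in norm, so passing to the limit produces $\langle F(z), y-z\rangle \geq 0$. Since $y \in C$ was arbitrary, $z \in \text{SOL}$.

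The main obstacle is really only in the reverse direction, and even there it is more a matter of recognizing the right perturbation $x_t = (1-t)z + ty$ than of overcoming any technical difficulty; once one writes it down, convexity of $C$ keeps the test point feasible and norm-continuity of $F$ at the single point $z$ is enough to pass to the limit. No Lipschitz hypothesis on $F$ is needed, and the argument relies crucially on both the convexity of $C$ (to build $x_t$) and the continuity of $F$ (to close the limit).
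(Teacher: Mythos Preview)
Your proof is correct and is exactly the standard Minty linearization argument. Note, however, that the paper does not give its own proof of this lemma: it is quoted as a known result from Takahashi's book (the citation \cite[Lem.\ 7.1.7]{wtak}), so there is nothing in the paper to compare against beyond the statement itself. What you wrote is the classical proof one finds in that reference and elsewhere, so nothing is missing or different in substance.
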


\section{Proposed Method}\label{Sec:Method}

\noindent
We give some assumptions on the feasible set $C$, the cost function $F$ and the iterative parameter $\{ \alpha_n \} $ below.

\begin{asm}\label{Ass:VI}
Suppose that the following hold:
\begin{itemize}
\item[{\rm(a)}] The feasible set $ C $ is a nonempty closed affine
      subset of the real Hilbert space $ H $.
\item[{\rm(b)}] $ F: C \to H $ is monotone and  uniformly continuous on bounded subsets of $H$.
\item[{\rm(c)}] The solution set $\text{SOL}$ of VI$(F,C)$ is nonempty.
\end{itemize}
\end{asm}

\begin{asm}\label{Ass:Parameters}
Suppose the real sequence $ \{ \alpha_n \} $
satisfy the following condition:
\begin{itemize}
  \item $ \{ \alpha_n \} \subset (0,1) $
       with  $0\leq \alpha_n \leq \alpha_{n+1} \leq \alpha<\frac{1}{3}$ for all $n$.
\end{itemize}
\end{asm}

\noindent
Suppose we define
$$
   r(x) := x - P_C (x-F(x))
$$
as the residual equation. Then if $y=x-F(x)$ in \eqref{load}, we obtain
\begin{equation}\label{load2}
   \langle F(x),r(x)\rangle\geq\|r(x)\|^2,~\forall x\in C.
\end{equation}

\noindent We next give our proposed inertial projection-type method.

\begin{algorithm}[H]
\caption{Inertial Projection Method }\label{Alg:AlgL}
\begin{algorithmic}[1]
\State Choose sequence $ \{ \alpha_n \} $ and $\sigma \in (0,1)$ such that the conditions from Assumption~\ref{Ass:Parameters} hold,
      and take $\gamma \in (0,1)$.
      Let $ x_0= x_1 \in H $ be a given starting point. Set $ n := 1 $.
\State Set
       \begin{equation*}
         w_n := x_n+\alpha_n (x_n-x_{n-1}).
       \end{equation*}
Compute $z_n:=P_C(w_n-F(w_n))$. If $r(w_n)=w_n-z_n=0$: STOP.
\State Compute $y_n=w_n-\gamma^{m_n}r(w_n)$, where $m_n$ is the smallest nonnegative integer satisfying
         \begin{equation}\label{ee31}
         \langle F(y_n),r(w_n)\rangle\geq\frac{\sigma}{2}\|r(w_n)\|^2.
         \end{equation}
Set $\eta_n:=\gamma^{m_n}$.
\State Compute
         \begin{equation}\label{e31}
         x_{n+1}=P_{C_n}(w_n),
         \end{equation}
where $C_n=\{x: h_n(x)\leq0\}$ and
         \begin{equation}\label{ego}
         h_n(x):=\langle F(y_n),x-y_n\rangle.
         \end{equation}
\State Set $n\leftarrow n+1$ and \textbf{goto 2}.
\end{algorithmic}
\end{algorithm}
\noindent
If $ r(w_n) = 0 $, then $w_n$ is a solution of VI($F,C$) \eqref{bami2}. In the analysis we assume that $ r(w_n) \neq 0 $ for infinitely many iterations, so that
Algorithm~\ref{Alg:AlgL} generates an infinite sequence satisfying $ r(w_n) \neq 0 $ for all $ n \in \mathbb{N} $.

\begin{rem}\label{Rem:Simple}
(a) Our proposed Algorithm~\ref{Alg:AlgL} requires, at each iteration, only one
projection onto the feasible set $C $ and another projection onto the half-space $C_n$ (which has a closed form solution, \cite{Cegielskibook}) and this is numerically less expensive than the twice computation of projection onto $C$ per iteration in extragradient method \cite{Korpelevich}. \\[-1mm]

\noindent (b) As we have mentioned before, Algorithm~\ref{Alg:AlgL} is much more applicable than \eqref{pppp3} and \eqref{mm2} because the Lipschtz constant of the cost function $F$ is not needed during implementations.  \hfill $\Diamond$
\end{rem}

\begin{lem}\label{new2}
Let the function $h_n$ be defined by
\eqref{ego}. Then
$$h_n(w_n)\geq\frac{\sigma\eta_n}{2}\|w_n-z_n\|^2.$$
\noindent In particular, if $w_n\neq z_n$, then
$h_n(w_n)>0$. If $x^*\in \text{SOL}$, then $h_n(x^*)\leq 0$.
 \end{lem}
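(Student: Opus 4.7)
The proof should be mostly a direct computation, unpacking the definitions of $y_n$, $\eta_n$, and $h_n$, combined with the Armijo-type line search condition (3.3) and the characterization of solutions from Lemma~\ref{lm27a}. I break it into three parts matching the three claims in the statement.

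For the first inequality, I would start by noting that the definition of $y_n$ in Step~3 of Algorithm~\ref{Alg:AlgL} rewrites as $w_n - y_n = \gamma^{m_n} r(w_n) = \eta_n r(w_n)$. Substituting this directly into the definition of $h_n$ gives
\[
 h_n(w_n) \;=\; \langle F(y_n),\, w_n-y_n\rangle \;=\; \eta_n\,\langle F(y_n),\, r(w_n)\rangle.
\]
Applying the Armijo condition \eqref{ee31} then yields
\[
 h_n(w_n) \;\geq\; \eta_n\cdot\frac{\sigma}{2}\|r(w_n)\|^2 \;=\; \frac{\sigma\eta_n}{2}\|w_n-z_n\|^2,
\]
using $r(w_n)=w_n-z_n$. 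The strict positivity when $w_n\neq z_n$ follows instantly because $\sigma>0$ and $\eta_n=\gamma^{m_n}>0$.

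For the third assertion, $h_n(x^*)\leq 0$ for $x^*\in\text{SOL}$, I would combine Lemma~\ref{lm27a} with the observation that $y_n\in C$. To see $y_n \in C$, rewrite $y_n=(1-\eta_n)w_n+\eta_n z_n$, a convex (in fact affine) combination of $w_n$ and $z_n$. By construction $z_n=P_C(\cdot)\in C$, and by an easy induction using Assumption~\ref{Ass:VI}(a) (the affine structure of $C$), every iterate remains in $C$: if $x_{n-1},x_n\in C$ then $w_n=(1+\alpha_n)x_n-\alpha_n x_{n-1}\in C$ since $C$ is affine, and the projection step $x_{n+1}=P_{C_n}(w_n)$ together with the affine property propagates this through. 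Hence $y_n\in C$, and Lemma~\ref{lm27a} applied to the continuous monotone $F$ gives $\langle F(y_n),y_n-x^*\rangle\geq 0$, i.e.\ $h_n(x^*)=\langle F(y_n),x^*-y_n\rangle\leq 0$.

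The only step that is more than mechanical is the justification that $y_n\in C$, which is why the affine (rather than merely convex) assumption on $C$ in Assumption~\ref{Ass:VI}(a) is needed: it keeps the extrapolated point $w_n$ inside $C$ so that $F(w_n)$ is well-defined and the solution characterization of Lemma~\ref{lm27a} applies at $y_n$. Everything else is a one-line substitution followed by the line search estimate.
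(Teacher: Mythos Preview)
Your argument for the first two claims is exactly the paper's: write $w_n-y_n=\eta_n r(w_n)$, plug into $h_n(w_n)$, and apply the line-search condition~\eqref{ee31}. For the third claim the paper likewise invokes Lemma~\ref{lm27a} at $x=y_n$, writing simply ``in particular $\langle F(y_n),y_n-x^*\rangle\ge 0$'' without ever discussing whether $y_n\in C$; you are being more scrupulous in trying to supply that missing justification.

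That extra step, however, has a gap. Your induction needs $x_{n+1}\in C$, but $x_{n+1}=P_{C_n}(w_n)$ projects onto the \emph{half-space} $C_n$, not onto $C$. Concretely, when $h_n(w_n)>0$ one has
\[
x_{n+1}=w_n-\frac{h_n(w_n)}{\|F(y_n)\|^2}\,F(y_n),
\]
and nothing forces the direction $F(y_n)$ to be parallel to the affine set $C$, so $w_n\in C$ does not imply $x_{n+1}\in C$. (The paper also takes $x_0=x_1\in H$, not in $C$, so even the base case is unstated.) In other words, the paper's own proof leaves this point implicit, and your attempt to close it does not quite succeed; the cleanest reading of Assumption~\ref{Ass:VI}(b) (``uniformly continuous on bounded subsets of $H$'') is that $F$ is tacitly defined and monotone on all of $H$, but even then the Minty-type inequality $\langle F(y_n),y_n-x^*\rangle\ge 0$ still requires $y_n\in C$, so the issue is genuinely glossed over in the source.
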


\begin{proof}
Since $y_n=w_n-\eta_n(w_n-z_n)$, using \eqref{ee31} we have
$$\begin{aligned}
    h_n(w_n)
    &=\langle F(y_n),w_n-y_n\rangle\\
    &=\eta_n\langle F(y_n),w_n-z_n\rangle\geq\eta_n\frac{\sigma}{2}\|w_n-z_n\|^2\geq0.
\end{aligned}$$
If $w_n\neq z_n$, then $h_n(w_n)>0$.
Furthermore, suppose $x^* \in \text{SOL}$. Then by Lemma \ref{lm27a} we have
$\langle F(x), x-x^*\rangle   \geq 0 \quad \text{for all } x\in C.$
In particular, $\langle F(y_n), y_n-x^*\rangle   \geq 0$ and hence $h_n(x^*) \leq 0.$
\end{proof}

\section{Convergence Analysis}\label{Sec:Convergence}
\noindent
Let us give weak convergence analysis of our proposed Algorithm~\ref{Alg:AlgL} in this section.

\begin{lem}\label{nece1}
Let $\{x_n\}$ be generated by Algorithm~\ref{Alg:AlgL}. Then under Assumptions~\ref{Ass:VI}
and \ref{Ass:Parameters}, we have that \\
(i)~~$\{x_n\}$ is bounded, and \\
(ii)~~$ \lim_{n\rightarrow \infty} \|x_{n+1}-w_n\|=0$.
\end{lem}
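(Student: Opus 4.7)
The plan is to derive a Fej\'er-type quasi-descent for $\psi_n := \|x_n - x^*\|^2$ (with $x^* \in \text{SOL}$ arbitrary) and then feed it into a Lyapunov sequence that absorbs the inertial perturbation. The starting observation is that $x^* \in C_n$ by Lemma \ref{new2}, so since $x_{n+1} = P_{C_n}(w_n)$, Lemma \ref{new3} gives
\begin{equation*}
\|x_{n+1} - x^*\|^2 \leq \|w_n - x^*\|^2 - \|w_n - x_{n+1}\|^2.
\end{equation*}
Writing $w_n = (1+\alpha_n)x_n - \alpha_n x_{n-1}$ and applying Lemma \ref{lm2}(c), the first term on the right equals $(1+\alpha_n)\psi_n - \alpha_n\psi_{n-1} + \alpha_n(1+\alpha_n)\|x_n - x_{n-1}\|^2$; for the second, I would decompose $w_n - x_{n+1} = (x_n - x_{n+1}) + \alpha_n(x_n - x_{n-1})$ and apply Young's inequality (with parameter $\varepsilon = \alpha_n$ on the cross term) to obtain the lower bound $(1-\alpha_n)\|x_{n+1}-x_n\|^2 - \alpha_n(1-\alpha_n)\|x_n - x_{n-1}\|^2$. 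Combining these yields the basic recursion
\begin{equation*}
\psi_{n+1} \leq \psi_n + \alpha_n(\psi_n - \psi_{n-1}) + 2\alpha_n\|x_n - x_{n-1}\|^2 - (1-\alpha_n)\|x_{n+1} - x_n\|^2.
\end{equation*}

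Next I would introduce the Lyapunov sequence
\begin{equation*}
\Gamma_n := \psi_n - \alpha_n \psi_{n-1} + \lambda \|x_n - x_{n-1}\|^2,
\end{equation*}
with a constant $\lambda$ to be chosen in $(2\alpha, 1-\alpha)$. Using the monotonicity $\alpha_n \leq \alpha_{n+1}$ from Assumption \ref{Ass:Parameters} to kill the sign-indefinite term $(\alpha_n - \alpha_{n+1})\psi_n \leq 0$, a direct substitution into the recursion above should produce
\begin{equation*}
\Gamma_{n+1} - \Gamma_n \leq (2\alpha - \lambda)\|x_n - x_{n-1}\|^2 + (\lambda + \alpha - 1)\|x_{n+1} - x_n\|^2,
\end{equation*}
and with $\lambda$ chosen in the interval above, both bracketed coefficients are strictly negative, so $\Gamma_n$ is nonincreasing and $\Gamma_{n+1} - \Gamma_n \leq -\mu\|x_{n+1} - x_n\|^2$ for some $\mu > 0$.

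Boundedness and asymptotic regularity then fall out quickly. From $\Gamma_n \leq \Gamma_1$ one extracts $\psi_n \leq \alpha \psi_{n-1} + \Gamma_1$, and a geometric-series induction (using $\alpha < 1$ and $x_0 = x_1$) shows $\{\psi_n\}$ is bounded, which is (i). Boundedness of $\{\psi_n\}$ in turn gives $\Gamma_n \geq -\alpha \sup_k \psi_k > -\infty$, so $\Gamma_n$ converges; telescoping the dissipation estimate delivers $\sum \|x_{n+1} - x_n\|^2 < \infty$, in particular $\|x_{n+1} - x_n\| \to 0$, and since $\|w_n - x_n\| = \alpha_n \|x_n - x_{n-1}\| \to 0$ the triangle inequality gives (ii). The main obstacle is arranging the Lyapunov bookkeeping so that the positive perturbation $2\alpha_n \|x_n - x_{n-1}\|^2$ from the inertial expansion is strictly dominated by the dissipation; this is precisely where the threshold $\alpha < 1/3$ enters, since the window $(2\alpha, 1-\alpha)$ for $\lambda$ is non-empty exactly when $3\alpha < 1$.
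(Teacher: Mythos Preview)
Your proposal is correct and follows essentially the same route as the paper: projection inequality via Lemma~\ref{new3}, expansion of $\|w_n-x^*\|^2$ by Lemma~\ref{lm2}(c), a Young-type lower bound on $\|w_n-x_{n+1}\|^2$, then a Lyapunov sequence $\Gamma_n$ whose monotonicity yields boundedness and summability of $\|x_{n+1}-x_n\|^2$. The only cosmetic difference is that the paper takes the coefficient of $\|x_n-x_{n-1}\|^2$ in $\Gamma_n$ to be the variable $2\alpha_n$ (so that this term cancels exactly in $\Gamma_{n+1}-\Gamma_n$, leaving a single dissipative term $(2\alpha_{n+1}-1+\alpha_n)\|x_{n+1}-x_n\|^2$), whereas you use a fixed $\lambda\in(2\alpha,1-\alpha)$ and carry two negative terms; both choices exploit $\alpha<1/3$ in the same way and lead to the same conclusions.
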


\begin{proof}
Let $x^*\in\text{SOL}$.
By Lemma~\ref{new3} we get (since $x^*\in
C_n$) that
\begin{eqnarray}\label{jj7}
    \|x_{n+1}-x^*\|^2&=&\|P_{C_n}(w_n)-x^*\|^2\leq\|w_n-x^*\|^2-\|x_{n+1}-w_n\|^2 \\
    &=&\|w_n-x^*\|^2-{\rm dist}^2(w_n,C_n)\nonumber.
\end{eqnarray}
Now, using Lemma \ref{lm2} (c), we have
\begin{eqnarray}\label{asu4}
    \|w_n-x^*\|^2&=&\|(1+\alpha_n)(x_n-x^*)-\alpha_n(x_{n-1}-x^*)\|^2\nonumber\\
    &=& (1+\alpha_n)\|x_n-x^*\|^2-\alpha_n\|x_{n-1}-x^*\|\nonumber\\
    &&+\alpha_n(1+\alpha_n)\|x_n-x_{n-1}\|^2.
    \end{eqnarray}
Also,
\begin{eqnarray}\label{e7}
\|x_{n+1}-w_n\|^2&=& \|x_{n+1}-(x_n+\alpha_n(x_n-x_{n-1}))\|^2 \nonumber \\
  &=& \|x_{n+1}-x_n\|^2+\alpha^2_k\|x_n-x_{n-1}\|^2-2\alpha_n \langle x_{n+1}-x_n,x_n-x_{n-1}\rangle \nonumber\\
  &\geq & \|x_{n+1}-x_n\|^2+\alpha^2_k\|x_n-x_{n-1}\|^2-2\alpha_n \|x_{n+1}-x_n\|\|x_n-x_{n-1}\| \nonumber\\
  &\geq & \|x_{n+1}-x_n\|^2+\alpha^2_k\|x_n-x_{n-1}\|^2-\alpha_n \|x_{n+1}-x_n\|^2\nonumber\\
  &&-\alpha_n \|x_n-x_{n-1}\|^2\nonumber\\
  &=&(1-\alpha_n)\|x_{n+1}-x_n\|^2+(\alpha_n^2-\alpha_n)\|x_n-x_{n-1}\|^2.
\end{eqnarray}
Combining \eqref{jj7}, \eqref{asu4} and \eqref{e7}, we get
\begin{eqnarray}\label{e8}
\|x_{n+1}-x^*\|^2&\leq & (1+\alpha_n)\|x_n-x^*\|^2-\alpha_n\|x_{n-1}-x^*\|^2 \nonumber \\
  &&+ \alpha_n(1+\alpha_n)\|x_n-x_{n-1}\|^2-(1-\alpha_n)\|x_{n+1}-x_n\|^2\nonumber \\
  &&-(\alpha_n^2-\alpha_n)\|x_n-x_{n-1}\|^2\nonumber \\
  &=&(1+\alpha_n)\|x_n-x^*\|^2-\alpha_n\|x_{n-1}-x^*\|^2 \nonumber \\
  &&-(1-\alpha_n)\|x_{n+1}-x_n\|^2+(\alpha_n(1+\alpha_n)-(\alpha_n^2-\alpha_n))\|x_n-x_{n-1}\|^2\nonumber \\
  &=&(1+\alpha_n)\|x_n-x^*\|^2-\alpha_n\|x_{n-1}-x^*\|^2 \nonumber \\
  &&-(1-\alpha_n)\|x_{n+1}-x_n\|^2+2\alpha_n\|x_n-x_{n-1}\|^2.
\end{eqnarray}
Using the fact that $\alpha_n\leq \alpha_{n+1}$, we obtain from \eqref{e8} that
\begin{eqnarray}\label{e9}
\|x_{n+1}-x^*\|^2  &\leq & (1+\alpha_{n+1})\|x_n-x^*\|^2-\alpha_n\|x_{n-1}-x^*\|^2 \nonumber \\
  &&-(1-\alpha_n)\|x_{n+1}-x_n\|^2+2\alpha_n\|x_n-x_{n-1}\|^2.
\end{eqnarray}
By \eqref{e9}, we get
\begin{eqnarray*}
&&\|x_{n+1}-x^*\|^2 -\alpha_{n+1}\|x_n-x^*\|^2+2\alpha_{n+1}\|x_{n+1}-x_n\|^2\leq
\|x_n-x^*\|^2-\alpha_n\|x_{n-1}-x^*\|^2\nonumber \\
&& +2\alpha_n\|x_n-x_{n-1}\|^2+2\alpha_{n+1}\|x_{n+1}-x_n\|^2-(1-\alpha_n)\|x_{n+1}-x_n\|^2\nonumber \\
&=& \|x_n-x^*\|^2-\alpha_n\|x_{n-1}-x^*\|^2+2\alpha_n\|x_n-x_{n-1}\|^2+(2\alpha_{n+1}-1+\alpha_n)\|x_{n+1}-x_n\|^2.
\end{eqnarray*}
Therefore,
\begin{eqnarray}\label{e5}
\|x_{n+1}-x^*\|^2 &\leq& \|x_n-x^*\|^2-\alpha_n\|x_{n-1}-x^*\|^2+2\alpha_n\|x_n-x_{n-1}\|^2\nonumber \\
&&+(2\alpha_{n+1}-1+\alpha_n)\|x_{n+1}-x_n\|^2.
\end{eqnarray}
\noindent Let us define
$$
\Gamma_n:= \|x_n-x^*\|^2-\alpha_n\|x_{n-1}-x^*\|^2+2\alpha_n\|x_n-x_{n-1}\|^2.
$$
\noindent  Then we have from \eqref{e5} that
\begin{equation}\label{e10}
\Gamma_{n+1}-\Gamma_n \leq (2\alpha_{n+1}-1+\alpha_n)\|x_{n+1}-x_n\|^2.
\end{equation}
Since  $0\leq \alpha_n\leq \alpha_{n+1}\leq \alpha<\frac{1}{3}$, we get
$-2\alpha_{n+1}\geq -2\alpha$ and $-\alpha_n \geq -\alpha$. This implies that
$-(2\alpha_{n+1}-1+\alpha_n)=-2\alpha_{n+1}+1-\alpha_n \geq -2\alpha +1-\alpha \geq 1-3\alpha>0$ since
$\alpha< \frac{1}{3}$. Now, let us define $\sigma:=1-3\alpha$. Then
\begin{equation}\label{e11}
 2\alpha_{n+1}-1+\alpha_n \leq -\sigma.
\end{equation}
Putting \eqref{e11} into \eqref{e10}, we have
\begin{equation}\label{e12}
\Gamma_{n+1}-\Gamma_n \leq -\sigma\|x_{n+1}-x_n\|^2.
\end{equation}
From \eqref{e12}, we see that $\{\Gamma_n\}$ is monotone nonincreasing. Furthermore,
\begin{eqnarray}\label{e13}
\Gamma_n&=& \|x_n-x^*\|^2-\alpha_n\|x_{n-1}-x^*\|^2+2\alpha_n\|x_n-x_{n-1}\|^2 \nonumber \\
  &\geq & \|x_n-x^*\|^2-\alpha_n\|x_{n-1}-x^*\|^2.
\end{eqnarray}
So,
\begin{eqnarray}\label{e14}
\|x_n-x^*\|^2&\leq & \alpha_n\|x_{n-1}-x^*\|^2+\Gamma_n \nonumber \\
   &\leq & \alpha\|x_{n-1}-x^*\|^2+\Gamma_1 \nonumber \\
   &\vdots&\nonumber \\
  &\leq & \alpha^k\|x_0-x^*\|^2+(1+\alpha+\alpha^2+\ldots+\alpha^{k-1})\Gamma_1 \nonumber\\
   &=& \alpha^k\|x_0-x^*\|^2+ \frac{\Gamma_1}{1-\alpha}.
\end{eqnarray}
From \eqref{e14}, we can infer that $\{x_n\}$ is bounded.
Using the definition of $\Gamma_n$, we have
\begin{eqnarray}\label{e15}
\Gamma_{n+1} &=& \|x_{n+1}-x^*\|^2-\alpha_{n+1}\|x_n-x^*\|^2+2\alpha_{n+1}\|x_{n+1}-x_n\|^2 \nonumber\\
   &\geq & -\alpha_{n+1}\|x_n-x^*\|^2.
\end{eqnarray}
Using \eqref{e14} in \eqref{e15}, we get

\begin{eqnarray}\label{e16}
-\Gamma_{n+1} &\leq & -\alpha_{n+1}\|x_n-x^*\|^2\leq \alpha\|x_n-x^*\|^2 \nonumber\\
&\leq & \alpha^{k+1}\|x_0-x^*\|^2+\frac{\alpha \Gamma_1}{1-\alpha}.
\end{eqnarray}
From \eqref{e12}, we get
$$
\sigma\|x_{n+1}-x_n\|^2\leq \Gamma_n-\Gamma_{n+1}
$$
\noindent and so
\begin{eqnarray*}
\sigma\sum_{j=1}^{n}\|x_{j+1}-x_j\|^2&\leq& \Gamma_1-\Gamma_{n+1}\\
&\leq& \Gamma_1+\alpha^{n+1}\|x_0-x^*\|^2+\frac{\alpha \Gamma_1}{1-\alpha}\\
&\leq& \alpha^{n+1}\|x_0-x^*\|^2+\frac{\Gamma_1}{1-\alpha}\\
&\leq& \|x_0-x^*\|^2+\frac{\Gamma_1}{1-\alpha}.
\end{eqnarray*}
Therefore, since $x_0=x_1$, we get
\begin{eqnarray*}
\sum_{k=1}^{\infty}\|x_{n+1}-x_n\|^2&\leq& \frac{1}{\sigma}\Big(\|x_0-x^*\|^2+\frac{\Gamma_1}{1-\alpha}\Big)\\
&=& \frac{1}{\sigma}\|x_0-x^*\|^2+\frac{1-\alpha_1}{1-\alpha}\|x_0-x^*\|^2\\
&=&\Big(\frac{1}{1-3\alpha}+\frac{1-\alpha_1}{1-\alpha}\Big)\|x_0-x^*\|^2<\infty.
\end{eqnarray*}
\noindent Observe that
\begin{eqnarray}\label{e17}
\|x_{n+1}-w_n\|&=&\|x_{n+1}-x_n-\alpha_n(x_n-x_{n-1})\|\nonumber \\
  &\leq& \|x_{n+1}-x_n\|+\alpha_n\|x_n-x_{n-1}\|\nonumber \\
  &\leq& \|x_{n+1}-x_n\|+\alpha\|x_n-x_{n-1}\|.
\end{eqnarray}
Using \eqref{e17}, we obtain
\begin{equation}\label{e18}
 \lim_{n\rightarrow \infty} \|x_{n+1}-w_n\|=0.
\end{equation}
\end{proof}

\begin{lem}\label{nece3}
Let $\{x_n\}$ generated by Algorithm~\ref{Alg:AlgL} above and
Assumptions~\ref{Ass:VI} and \ref{Ass:Parameters} hold.
Then
\begin{itemize}
\item[{\rm(a)}] $\displaystyle\lim_{n\rightarrow \infty} \eta_{n}\|w_n-z_n\|^2=0$;
\item[{\rm(b)}] $\displaystyle\lim_{n\rightarrow \infty} \|w_n-z_n\|=0.$
\end{itemize}
\end{lem}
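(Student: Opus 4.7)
My approach is: for part (a), a direct error-bound argument exploiting that $h_n$ is affine and combining it with Lemma~\ref{new1}; for part (b), a contradiction argument invoking the defining (minimality) property of the Armijo line search \eqref{ee31}.

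For (a), the key observation is that $\eta_n\|w_n-z_n\|^2$ can be controlled in terms of $\|x_{n+1}-w_n\|$, which already tends to zero by Lemma~\ref{nece1}(ii). First I would verify that the sequences $\{x_n\},\{w_n\},\{z_n\},\{y_n\},\{F(y_n)\}$ are all bounded: $\{x_n\}$ is bounded by Lemma~\ref{nece1}(i); $\{w_n\}$ follows from $w_n=x_n+\alpha_n(x_n-x_{n-1})$ with $\alpha_n\le\alpha<1/3$; $\{F(w_n)\}$ is bounded by Lemma~\ref{lm28a}; then $\{z_n=P_C(w_n-F(w_n))\}$ is bounded by nonexpansivity of $P_C$; $\{y_n=(1-\eta_n)w_n+\eta_n z_n\}$ is bounded as a convex combination (since $\eta_n\in(0,1]$); and finally $\{F(y_n)\}$ is bounded again by Lemma~\ref{lm28a}. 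Let $M$ be an upper bound on $\|F(y_n)\|$. Because $h_n$ is affine, it is Lipschitz continuous with modulus $\|F(y_n)\|\le M$, and by Lemma~\ref{new2} we have $h_n(w_n)\ge(\sigma\eta_n/2)\|w_n-z_n\|^2\ge 0$. Since $x_{n+1}=P_{C_n}(w_n)$, Lemma~\ref{new1} now yields
\[
\|w_n-x_{n+1}\|=\mathrm{dist}(w_n,C_n)\ge\frac{h_n(w_n)}{\|F(y_n)\|}\ge\frac{\sigma\eta_n\|w_n-z_n\|^2}{2M},
\]
and Lemma~\ref{nece1}(ii) lets us conclude $\eta_n\|w_n-z_n\|^2\to 0$.

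For (b), I would argue by contradiction. Suppose there is a subsequence with $\|w_{n_k}-z_{n_k}\|\ge\varepsilon>0$. Then part (a) forces $\eta_{n_k}\to 0$, so for all large $k$ we have $\eta_{n_k}<1$, i.e.\ $m_{n_k}\ge 1$, and by minimality of $m_{n_k}$ the trial step $\gamma^{m_{n_k}-1}=\eta_{n_k}/\gamma$ violates \eqref{ee31}:
\[
\left\langle F\!\left(w_{n_k}-\tfrac{\eta_{n_k}}{\gamma}r(w_{n_k})\right),r(w_{n_k})\right\rangle<\frac{\sigma}{2}\|r(w_{n_k})\|^2.
\]
Combining this inequality with the descent estimate \eqref{load2}, namely $\langle F(w_{n_k}),r(w_{n_k})\rangle\ge\|r(w_{n_k})\|^2$, and applying Cauchy--Schwarz, one obtains
\[
\left(1-\frac{\sigma}{2}\right)\|r(w_{n_k})\|<\left\|F(w_{n_k})-F\!\left(w_{n_k}-\tfrac{\eta_{n_k}}{\gamma}r(w_{n_k})\right)\right\|.
\]
Since $(\eta_{n_k}/\gamma)r(w_{n_k})\to 0$ (as $\eta_{n_k}\to 0$ and $\{r(w_n)\}$ is bounded) and $F$ is uniformly continuous on bounded subsets, the right-hand side vanishes. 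This contradicts $\|r(w_{n_k})\|\ge\varepsilon>0$ since $\sigma\in(0,1)$ makes $1-\sigma/2>0$.

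The main obstacle is part (b): the Armijo step-size may genuinely degenerate, and to rule it out one must extract information from the \emph{failure} of the previous trial step and pass to a limit. It is precisely here that the uniform-continuity-on-bounded-subsets hypothesis is used in place of global Lipschitz continuity, which is the central technical selling point of the paper.
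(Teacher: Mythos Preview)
Your proof is correct. Part (a) is essentially identical to the paper's: both bound $\mathrm{dist}(w_n,C_n)=\|w_n-x_{n+1}\|$ from below by $\tfrac{\sigma}{2M}\eta_n\|w_n-z_n\|^2$ via Lemmas~\ref{new1} and~\ref{new2} and then invoke Lemma~\ref{nece1}(ii); your version is in fact slightly more direct, since the paper first passes through $\|x_{n+1}-x^*\|^2\le\|w_n-x^*\|^2-\mathrm{dist}^2(w_n,C_n)$ and factors the resulting difference of squares before using $\|w_n-x_{n+1}\|\to 0$.

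For (b) the core idea is again the same---exploit the \emph{failed} Armijo trial at level $\eta_n/\gamma$ and pass to the limit using uniform continuity of $F$ on bounded sets---but the algebraic packaging differs. The paper splits into the cases $\liminf\eta_n>0$ and $\liminf\eta_n=0$; in the second case it rewrites $2\langle F(w_n),w_n-z_n\rangle$ via the identity of Lemma~\ref{lm2}(b) as $\|w_n-z_n\|^2+\|w_n-t_n\|^2-\|z_n-t_n\|^2$ with $t_n=w_n-F(w_n)$, and obtains $\|w_n-t_n\|<\|z_n-t_n\|$ for large $n$, contradicting $z_n=P_C(t_n)$. You instead appeal to \eqref{load2} directly and finish with Cauchy--Schwarz, which is shorter and avoids the two-case split. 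The two routes are equivalent in substance: the projection inequality the paper eventually contradicts is exactly what underlies \eqref{load2}, and both arguments implicitly use $w_n\in C$ at this step, consistent with the paper's affine assumption on $C$ in Assumption~\ref{Ass:VI}(a).
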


\begin{proof}
Let $x^* \in \text{SOL} $.
Since $F$ is uniformly
continuous on bounded subsets of $X$, then $\{F(x_n)\}, \{z_n\},
\{w_n\}$ and $\{F(y_n)\}$ are bounded. In particular, there exists
$M>0$ such that $\|F(y_n)\|\leq M$ for all $n\in\mathbb{N}$. Combining
Lemma~\ref{new1} and Lemma~\ref{new2}, we get
\begin{eqnarray}\label{jjj7}
    \|x_{n+1}-x^*\|^2&=&\|P_{C_n}(w_n)-x^*\|^2\leq\|w_n-x^*\|^2-\|x_{n+1}-w_n\|^2 \nonumber \\
    &=&\|w_n-x^*\|^2-{\rm dist}^2(w_n,C_n)\nonumber\\
&\leq& \|w_n-x^*\|^2-\Big(\frac{1}{M}h_n(w_n)\Big)^2 \nonumber\\
    &\leq& \|w_n-x^*\|^2-\Big(\frac{1}{2M}\sigma\eta_n\|r(w_n)\|^2\Big)^2\nonumber\\
    &=&\|w_n-x^*\|^2-\Big(\frac{1}{2M}\sigma\eta_n\|w_n-z_n\|^2\Big)^2.
\end{eqnarray}
Since $\{x_n\}$ is bounded, we obtain from \eqref{jjj7} that
\begin{eqnarray}\label{asu13}
\Big(\frac{1}{2M}\sigma\eta_n\|w_n-z_n\|^2\Big)^2 &\leq& \|w_n-x^*\|^2- \|x_{n+1}-x^*\|^2\nonumber \\
&=&\Big(\|w_n-x^*\|- \|x_{n+1}-x^*\|\Big)\Big(\|w_n-x^*\|+ \|x_{n+1}-x^*\|\Big)\nonumber \\
&\leq& \|w_n-x^*\|- \|x_{n+1}-x^*\|M_1 \nonumber \\
&\leq& \|w_n-x_{n+1}\|M_1,
\end{eqnarray}
where $M_1:=\sup_{n\geq 1}\{\|w_n-x^*\|+ \|x_{n+1}-x^*\|\}$. This establishes (a).\\

\noindent To establish (b), We distinguish two cases depending on the behaviour of (the bounded)
sequence of step-sizes $\{\eta_n\}$.

\noindent \textbf{Case 1}: Suppose that $ \liminf_{n \to \infty}
\eta_n > 0 $. Then
$$0\leq \|r(w_n)\|^2=\frac{\eta_{n}\|r(w_n)\|^2}{\eta_n}$$
\noindent and this implies that
$$\begin{aligned}
    \limsup_{n\to\infty}\|r(w_n)\|^2
    &\leq\limsup_{n\to\infty}\bigg(\eta_n\|r(w_n)\|^2\bigg)
                             \bigg(\limsup_{n\to\infty} \frac{1}{\eta_n}\bigg)\\
    &=\bigg(\limsup_{n\to\infty}\eta_n\|r(w_n)\|^2\bigg)\frac{1}{\liminf_{n\to\infty}\eta_n}\\
    &=0.
\end{aligned}$$
Hence, $\limsup_{n \to \infty}\|r(w_n)\|=0$. Therefore,
$$\lim_{n\rightarrow\infty}\|w_n-z_n\|=\lim_{n\rightarrow\infty}\|r(w_n)\|=0.$$

\noindent \textbf{Case 2}: Suppose that $ \liminf_{n \to \infty}
\eta_n= 0 $. Subsequencing if necessary, we may assume without loss of generality that
$\lim_{n\to\infty}\eta_n=0$ and $\lim_{n\rightarrow \infty}\|w_n-z_n\|=a \geq 0$.

\noindent Define
$\bar{y}_n:=\frac{1}{\gamma}\eta_nz_n+\Big(1-\frac{1}{\gamma}\eta_n\Big)w_n$
or, equivalently,
$\bar{y}_n-w_n=\frac{1}{\gamma}\eta_n(z_n-w_n)$.
Since $\{z_n-w_n\}$ is bounded and since $\lim_{n \to
\infty}\eta_n=0$ holds, it follows that
\begin{equation}\label{ify2}
   \lim_{n\to\infty}\|\bar{y}_n-w_n\|=0.
\end{equation}
From the step-size rule and the definition of $\bar{y}_k$, we have
$$\langle F(\bar{y}_n),w_n-z_n\rangle<\frac{\sigma}{2}\|w_n-z_n\|^2,\ \forall n\in\mathbb{N},$$
or equivalently
$$
2\langle F(w_n),w_n-z_n\rangle+
2\langle F(\bar{y}_n)-F(w_n),w_n-z_n\rangle
<\sigma\|w_n-z_n\|^2,\ \forall n\in\mathbb{N}.$$
Setting $t_n:=w_n-F(w_n)$, we obtain form the last inequality that
$$
2\langle w_n-t_n,w_n-z_n\rangle
+
2\langle F(\bar{y}_n)-F(w_n),w_n-z_n\rangle
<\sigma\|w_n-z_n\|^2,\ \forall n\in\mathbb{N}.
$$
Using Lemma~\ref{lm2} (b)  we get
$$
2\langle w_n-t_n,w_n-z_n\rangle=\|w_n-z_n\|^2+\|w_n-t_n\|^2-\|z_n-t_n\|^2.
$$
Therefore,
$$
\|w_n-t_n\|^2-\|z_n-t_n\|^2 < (\sigma-1)\|w_n-z_n\|^2-2\langle F(\bar{y}_n)-F(w_n),w_n-z_n\rangle\ \forall n\in\mathbb{N}.
$$
Since $F$ is uniformly continuous on bounded subsets of $H$ and
\eqref{ify2}, if $a>0$ then the right hand side of the last inequality converges to $(\sigma-1)a<0$ as $n \to \infty$.
From the last inequality we have
$$
\limsup_{n \to \infty} \left( \|w_n-t_n\|^2-\|z_n-t_n\|^2 \right) \leq (\sigma-1)a < 0.
$$
For $\epsilon=-(\sigma-1)a/2 >0$, there
exists $N\in\mathbb{N}$ such that
$$\|w_n-t_n\|^2 - \|z_n-t_n\|^2 \leq (\sigma-1)a+\epsilon= (\sigma-1)a/2 <0  \quad \forall n\in \mathbb{N},n\geq N,$$
leading to
$$\|w_n-t_n\| < \|z_n-t_n\| \quad \forall n\in \mathbb{N},n\geq N, $$
which is a contradiction to the definition of
$z_n=P_C(w_n-F(w_n))$. Hence $a=0$,
which completes the proof.
\end{proof}

\begin{lem}\label{Lem:help}
Let Assumptions~\ref{Ass:VI} and \ref{Ass:Parameters} hold.
Furthermore let $\{x_{n_k}\}$ be a subsequence of $ \{x_n\}$
converging weakly to a limit point $ p $. Then $ p \in \text{SOL} $.
\end{lem}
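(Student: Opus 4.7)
The plan is to apply Lemma~\ref{lm27a}, which reduces the claim $p\in\text{SOL}$ to verifying $p\in C$ together with $\langle F(y),y-p\rangle\ge 0$ for every $y\in C$. The natural candidate already sitting inside $C$ is $z_n=P_C(w_n-F(w_n))$, so I would first transfer the weak limit from $\{x_{n_k}\}$ to $\{z_{n_k}\}$. The summability $\sum_n\|x_{n+1}-x_n\|^2<\infty$ established inside the proof of Lemma~\ref{nece1} gives $\|x_{n+1}-x_n\|\to 0$, hence $\|w_n-x_n\|=\alpha_n\|x_n-x_{n-1}\|\to 0$; combined with $\|w_n-z_n\|\to 0$ from Lemma~\ref{nece3}(b), both $w_{n_k}$ and $z_{n_k}$ converge weakly to $p$, and weak closedness of the closed convex set $C$ yields $p\in C$.

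For the inequality, I would fix an arbitrary $y\in C$ and apply the projection characterization \eqref{a22} to $z_n=P_C(w_n-F(w_n))$ to obtain $\langle F(w_n),y-z_n\rangle\ge\langle w_n-z_n,y-z_n\rangle$. Then I would invoke monotonicity of $F$ on $C$ in the form $\langle F(y)-F(w_n),y-w_n\rangle\ge 0$ (the affine structure of $C$ keeps $w_n=(1+\alpha_n)x_n-\alpha_n x_{n-1}$ in $C$) and combine the two to reach
\begin{equation*}
\langle F(y),y-z_n\rangle\ge\langle w_n-z_n,y-z_n\rangle+\langle F(y)-F(w_n),w_n-z_n\rangle.
\end{equation*}
Boundedness of $\{x_n\}$ (Lemma~\ref{nece1}(i)) propagates to $\{w_n\}$ and $\{z_n\}$, and Lemma~\ref{lm28a} then supplies boundedness of $\{F(w_n)\}$. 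Since $\|w_n-z_n\|\to 0$ by Lemma~\ref{nece3}(b), both summands on the right vanish along $n_k$, so taking $\liminf_k$ on the left and using $z_{n_k}\rightharpoonup p$ yields $\langle F(y),y-p\rangle\ge 0$. Lemma~\ref{lm27a} then concludes $p\in\text{SOL}$.

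The main obstacle I anticipate is arranging the estimate so that the absence of a Lipschitz constant for $F$ does no damage: one cannot write $\|F(w_n)-F(y)\|\le L\|w_n-y\|$, so the whole argument has to be structured so that every error term carries a factor of $\|w_n-z_n\|$, which Lemma~\ref{nece3}(b) forces to zero, while $\{F(w_n)\}$ is kept bounded purely via Lemma~\ref{lm28a}. A secondary, easy-to-overlook point is that monotonicity is only available on $C$; this is precisely where Assumption~\ref{Ass:VI}(a) is used, via the affineness of $C$ keeping the extrapolated iterates $w_n$ inside the domain of $F$.
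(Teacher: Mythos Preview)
Your proposal is correct and follows essentially the same route as the paper: the projection characterization \eqref{a22} applied to $z_n=P_C(w_n-F(w_n))$, monotonicity of $F$, the vanishing of $\|w_n-z_n\|$ from Lemma~\ref{nece3}(b), and finally Minty's lemma (Lemma~\ref{lm27a}). The paper passes to the limit through $w_{n_k}\rightharpoonup p$ rather than $z_{n_k}\rightharpoonup p$, and it omits the explicit verification that $p\in C$ and the remark on why $w_n\in C$ (affineness of $C$); your write-up is actually a bit more careful on these two points, but the argument is the same.
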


\begin{proof}
By the definition of $z_{n_k}$ together with \eqref{a22}, we have
$$\langle w_{n_k}-F(w_{n_k})-z_{n_k},x-z_{n_k}\rangle\leq 0,\ \forall x\in C,$$
\noindent which implies that
$$\langle w_{n_k}-z_{n_k},x-z_{n_k}\rangle\leq\langle F(w_{n_k}),x-z_{n_k}
  \rangle,\ \forall x \in C.$$
Hence,
\begin{equation}\label{j11}
   \langle w_{n_k}-z_{n_k},x-z_{n_k}\rangle +\langle F(w_{n_k}),z_{n_k}-w_{n_k}\rangle
   \leq \langle F(w_{n_k}),x-w_{n_k}\rangle,\ \forall x\in C.
\end{equation}
Fix $x \in C$ and let $k\rightarrow \infty$ in \eqref{j11}.
Since
$\lim_{k \to \infty} \|w_{n_k}-z_{n_k}\|=0 $, we have
\begin{equation}\label{anbi}
    0\leq \liminf_{k \to \infty} \langle F(w_{n_k}),x-w_{n_k}\rangle
\end{equation}
for all $ x \in C $.
It follows from \eqref{j11} and the monotonicity of $F$ that
 \begin{eqnarray*}
 \langle w_{n_k}-z_{n_k},x-z_{n_k}\rangle +\langle F(w_{n_k}),z_{n_k}-w_{n_k}\rangle
    &\leq& \langle F(w_{n_k}),x-w_{n_k}\rangle\\
    &\leq& \langle F(x),x-w_{n_k}\rangle  \quad \forall x\in C.
 \end{eqnarray*}
Letting $k \to +\infty$ in the last inequality, remembering that  $\lim_{k \to \infty}\|w_{n_k}-z_{n_k}\|=0$  for all $k$, we have
$$
\langle F(x),x-p\rangle \geq 0 \quad  \forall x\in C.
$$
\noindent
In view of Lemma~\ref{lm27a}, this implies $p\in\text{SOL}$.
\end{proof}

\begin{thm}\label{t32}
Let Assumptions~\ref{Ass:VI} and \ref{Ass:Parameters} hold.
Then the sequence $ \{x_n\} $ generated by Algorithm~\ref{Alg:AlgL}
weakly converges to a point in $\text{SOL} $.
\end{thm}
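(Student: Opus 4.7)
The plan is to apply Lemma \ref{super} with the target set taken to be $\text{SOL}$. This reduces the theorem to verifying two claims: (i) $\lim_{n\to\infty}\|x_n - x^*\|$ exists for every $x^*\in\text{SOL}$; and (ii) every weak sequential cluster point of $\{x_n\}$ belongs to $\text{SOL}$. The bulk of the work has already been absorbed into the preceding lemmas; what remains is to assemble them.

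For part (i), I would revisit inequality \eqref{e8} from the proof of Lemma \ref{nece1}. Since $\alpha_n<1/3<1$, the term $-(1-\alpha_n)\|x_{n+1}-x_n\|^2$ is nonpositive and can be dropped, leaving
\begin{equation*}
\|x_{n+1}-x^*\|^2 \leq \|x_n-x^*\|^2 + \alpha_n\bigl(\|x_n-x^*\|^2 - \|x_{n-1}-x^*\|^2\bigr) + 2\alpha_n\|x_n-x_{n-1}\|^2.
\end{equation*}
Setting $\psi_n := \|x_n-x^*\|^2$ and $\delta_n := 2\alpha_n\|x_n-x_{n-1}\|^2$ puts this in exactly the form required by Lemma \ref{la5}. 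The summability $\sum_n\delta_n<\infty$ is inherited from the bound $\sum_{n\geq 1}\|x_{n+1}-x_n\|^2<\infty$ established at the end of Lemma \ref{nece1}, and the uniform bound $\alpha_n\leq\alpha<1/3<1$ supplies the remaining hypothesis. Lemma \ref{la5}(ii) then delivers the existence of $\lim_{n\to\infty}\|x_n-x^*\|^2$, and hence of $\lim_{n\to\infty}\|x_n-x^*\|$.

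For part (ii), suppose $x_{n_k}\rightharpoonup p$ along some subsequence. The same summability implies $\|x_n-x_{n-1}\|\to 0$, and since $w_n-x_n = \alpha_n(x_n-x_{n-1})$ with $\alpha_n$ bounded, we obtain $\|w_n-x_n\|\to 0$; in particular $w_{n_k}\rightharpoonup p$. Lemma \ref{Lem:help} then forces $p\in\text{SOL}$ (its proof already runs through $w_{n_k}$ and $z_{n_k}$, using the fact $\|w_{n_k}-z_{n_k}\|\to 0$ supplied by Lemma \ref{nece3}(b)). Combining (i) and (ii), Lemma \ref{super} yields weak convergence of $\{x_n\}$ to some element of $\text{SOL}$, which is the conclusion of the theorem.

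No substantive new obstacle is anticipated: the quasi-Fejér-type recursion, the summability of $\|x_{n+1}-x_n\|^2$, the asymptotic identity $\|w_n-z_n\|\to 0$, and the identification of weak cluster points with solutions have all been proved in Lemmas \ref{nece1}, \ref{nece3}, and \ref{Lem:help}. The only point deserving care is verifying that \eqref{e8}, after discarding the nonpositive residual, truly matches the template of Lemma \ref{la5}; this is ensured precisely by the assumption $\alpha<1/3$, which is also what made the summability $\sum\|x_{n+1}-x_n\|^2<\infty$ available in the first place.
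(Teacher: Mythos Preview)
Your proposal is correct and follows essentially the same route as the paper: verify the two hypotheses of Lemma~\ref{super} (Opial-type lemma) using the estimates already established in Lemmas~\ref{nece1}--\ref{Lem:help}, then conclude weak convergence. In fact you are more explicit than the paper's own proof, which simply asserts that (i) $\lim_{n\to\infty}\|x_n-x^*\|$ exists and (ii) every weak cluster point lies in $\text{SOL}$; your derivation of (i) via Lemma~\ref{la5} applied to \eqref{e8} (with $\delta_n=2\alpha_n\|x_n-x_{n-1}\|^2$ summable by Lemma~\ref{nece1}) and your remark that $\|w_n-x_n\|\to 0$ transfers the weak limit from $x_{n_k}$ to $w_{n_k}$ are precisely the details the paper leaves implicit.
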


\begin{proof}
We have shown that \\
(i) $\lim_{n \to \infty} \|x_n-x^*\|$ exists;\\
(ii) $\omega_w(x_n) \subset \text{SOL}$, where
$\omega_w(x_n):=\{x:\exists x_{n_j}\rightharpoonup x\}$ denotes the weak $\omega$-limit set of $\{x_n\}$.\\
Then, by Lemma \ref{super}, we have that $\{x_n\}$ converges weakly to a point in $\text{SOL} $.
\end{proof}

\begin{rem}
(a) One can still obtain weak convergence for Algorithm~\ref{Alg:AlgL} when $C$ is a nonempty, closed and convex subset of $H$. \\[-1mm]

\noindent
(b) In finite-dimensional spaces, Theorem \ref{t32} holds when $ F $ is monotone and continuous. \\[-1mm]

(c) Lemmas 3.5, 4.1, 4.2 and Theorem 4.4 can be obtained when $F$ pseudo-monotone and weakly sequentially continuous (i.e., for all $x,y \in H$,
$\langle F(x),y-x\rangle\geq0\Longrightarrow\langle F(y),y-x\rangle\geq0;$). The reader can see, for example, \cite{ShehuCOAM}.\hfill $\Diamond$
\end{rem}

\begin{rem}
Our proposed method in this paper gives weak convergence results in infinite dimensional Hilbert space. There exists strong convergence methods in the literature for solving variational inequality problem in infinite dimensional Hilbert space (see, for example, \cite{Ceng,Censor1,KanzowShehu2,MaingeSIAM,Malitsky,Mashreghi,Nadezhkina,ShehuCalcolo}). These methods use ideas of viscosity terms, Halpern iterations and hybrid methods. It has been shown numerically in \cite{KanzowShehu2} that viscosity and Halpern-type strongly convergent methods outperform those of hybrid methods. Nonetheless, proposed viscosity and Halpern-type strongly convergent methods involve the iterative parameter that is both diminishing and non-summable. These conditions on the iterative parameters make the viscosity and Halpern-type strongly convergent methods to be slower than our proposed method in this paper in terms number of iterations and CPU time.
\end{rem}

\section{Numerical Experiments}\label{Sec:Numerics}

In this section, we discuss the numerical behaviour of Algorithm~\ref{Alg:AlgL} using different test examples taken from the literature which are describe below and compare our method with \eqref{pppp3}, \eqref{mm2} and Shehu and Iyiola algorithm 3.2 in \cite{ShehuVIM}.\\

\begin{exm}\label{her1}{\bf Equilibrium-optimization Model}
\noindent\\
In this example, we consider an equilibrium-optimization model (see, for example, \cite{ShehuJIMO}) which can be regarded as an extension of a Nash-Cournot oligopolistic equilibrium model in electricity markets.\\

\noindent
In this equilibrium model, we assume that there are $m$ companies, each company $i$ may possess $I_i$ generating units.
Suppose we denote by $x$, the vector whose entry $x_j$ stands for the power generating by unit $j$.
Suppose the price $p_i(s)$ is a decreasing
affine function of $s$ where $s:=\sum_{j=1}^N x_j$ where $N$ is the number of all generating
units. Thus, $p_i(s):=\alpha-\beta_is$. Then the profit made by company $i$ is given by
$f_i(x):=p_i(s)\sum_{j \in I_i}x_j-\sum_{j \in I_i}c_j(x_j)$, where $c_j(x_j)$ is the cost for generating
$x_j$ by generating unit $j$ . Let us assume that $K_i$ is the strategy set of company $i$, which implies that
$\sum_{j \in I_i}x_j \in K_i$ for each $i$. Then the strategy set of the
model is $C:=K_1 \times K_2 \times\ldots \times K_m$.\\

\noindent
A commonly used approach when each company wants to maximize its profit by choosing the corresponding
production level under the presumption that the production of the other companies are
parametric input is the Nash equilibrium concept.\\

\noindent
We recall that a point $x^* \in C=K_1 \times K_2 \times\ldots \times K_m$ is an equilibrium point if
$$
f_i(x^*)\geq f_i(x^*[x_i]) \forall x_i \in K_i,~~i=1,2,\ldots,m,
$$
\noindent
where the vector $x^*[x_i]$ stands for the vector obtained from $x^*$ by replacing $x^*_i$
with $x_i$. Define
$$
f(x,y):=\psi(x,y)-\psi(x,x)
$$
\noindent with
$$
\psi(x,y):=-\sum_{i=1}^n f_i(x^*[y_i]).
$$
\noindent
Then the problem of finding a Nash equilibrium point of our model can be formulated as
\begin{eqnarray}\label{qe}
X^* \in C:f(x^*,x)\geq 0~~\forall x \in C.
\end{eqnarray}
\noindent
Suppose for every $j$, the cost $c_j$ for production and the environmental fee $g$ are increasingly convex functions. The convexity assumption here means that both the cost and fee for producing a unit production increases as the quantity of the production gets larger.
Under this convexity assumption, it is not hard to see that \eqref{qe} is equivalent to (see, \cite{Yen})
\begin{eqnarray}\label{qe1}
x \in C:\langle Bx-a+\nabla \varphi(x), y-x\rangle\geq 0~~\forall y \in C,
\end{eqnarray}
where
\begin{eqnarray*}
&&a:=(\alpha,\alpha,\ldots,\alpha)^T\\
&&B_1=
\left(
  \begin{array}{ccccc}
    \beta_1 & 0 & 0 & \ldots & 0 \\
    0 & \beta_2 & 0 & \ldots & 0 \\
    \ldots & \ldots & \ldots & \ldots & \ldots \\
    0 & 0 & 0 & 0 & \beta_m \\
  \end{array}
\right)
B=
\left(
  \begin{array}{ccccc}
    0 & \beta_1 & \beta_1 & \ldots & \beta_1 \\
    \beta_2 & 0 & \beta_2 & \ldots & \beta_2 \\
    \ldots & \ldots & \ldots & \ldots & \ldots \\
    \beta_m & \beta_m & \beta_m & \ldots & \beta_m \\
  \end{array}
\right)\\
&&\varphi(x):=x^TB_1x+\sum_{j=1}^N c_j(x_j).
\end{eqnarray*}
Note that when $c_j$ is differentiable convex for every $j$.\\

\noindent
We tested the proposed algorithm with the cost function given by
$$
c_j(x_j)=\frac{1}{2}x_j^TDx_j+d^Tx_j.
$$
\noindent
The parameters $\beta_j$ for all $j =1,\ldots,m$, matrix $D$ and vector $d$ were generated randomly
in the interval $(0,1]$, $[1,40]$ and $[1,40]$ respectively.\\

\noindent
We perform numerical implementations using different choices of $10$, and $20$, different initial choices $x_1$ generated randomly in the interval $[1,40]$ and $m=10$ with the stopping criterion as $\|x_{n+1}-x_n\|\leq10^{-2}$. Let us assume that each company have the same lower production bound 1 and upper production bound 40, that is,
$$K_i:=\{x_i: 1 \leq x_i \leq 40\},~~i=1,\ldots,10.$$
We compare our proposed Algorithm \ref{Alg:AlgL} with algorithm 3.2 proposed by Shehu and Iyiola in \cite{ShehuVIM}.\\

\begin{table}[H]
\caption{Example \ref{her1} Comparison: Proposed Alg. \ref{Alg:AlgL} and Shehu \& Iyiola Alg. 3.2 (SI Alg.) for $\sigma = 0.5$}
\centering 
\begin{tabular}{c c c c c c c c c c c c c c}
\toprule[1.5pt]
          &  \multicolumn{4}{c}{N=10} & & \multicolumn{4}{c}{N=20} \\
         \cline{2-5}\cline{7-10}\\
          &  \multicolumn{2}{c}{No. of Iter.} & \multicolumn{2}{c}{CPU time ($10^{-2}$)} & & \multicolumn{2}{c}{No. of Iter.} & \multicolumn{2}{c}{CPU time ($10^{-2}$)}\\
         \cline{2-5} \cline{7-10}
$\gamma$ & Alg. \ref{Alg:AlgL}  & SI Alg. & Alg. \ref{Alg:AlgL}  & SI Alg. & & Alg. \ref{Alg:AlgL}  & SI Alg. & Alg. \ref{Alg:AlgL}  & SI Alg.
\\
 \toprule[1.5pt] \\
$0.01$  & 223 & 435 & $2.6822$ & $8.8916$ && 228 & 520 & $7.9536$ & 22.352 \\ [0.5ex]
\hline \\
$0.1$  &  38 & 518 & $1.4433$ & 13.108 && 36 & 473  & $1.0797$ & 14.871 \\ [0.5ex]
 \hline \\
$0.5$  &  10 & 434 & $0.8232$ & $7.9301$ && 9 & 285 & $0.4196$ & $9.6452$ \\ [0.5ex]
 \hline\\
$0.8$  &  9 & 514 & 12.3590 & 13.1390 && 8 & 320 & $0.4596$ & $8.3524$ \\ [0.5ex]
 \hline
\end{tabular}
\label{table1}
\end{table}


\begin{figure}[H]
\minipage{0.50\textwidth}
\includegraphics[width=\linewidth]{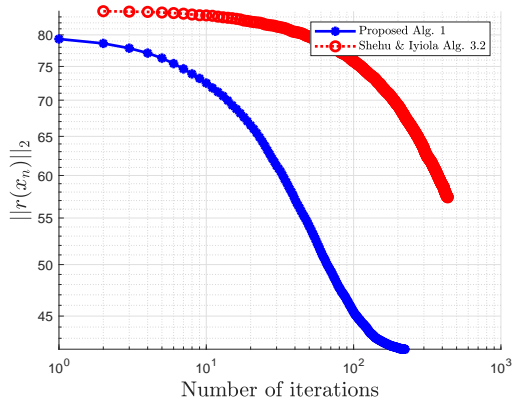}
\caption{Example 5.1: $\gamma = 0.01$, $N = 10$}\label{fig1}
\endminipage\hfill
\minipage{0.50\textwidth}
\includegraphics[width=\linewidth]{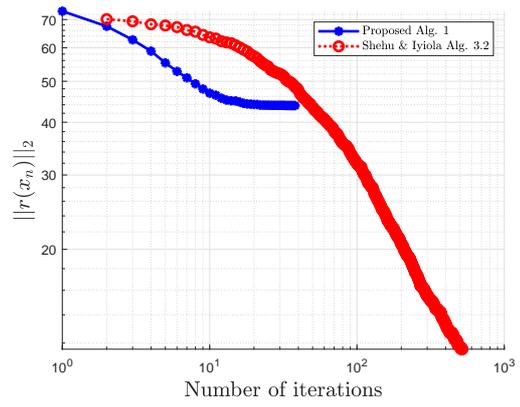}
\caption{Example 5.1: $\gamma = 0.1$, $N = 10$}\label{fig2}
\endminipage
\end{figure}

\begin{figure}[H]
\minipage{0.50\textwidth}
\includegraphics[width=\linewidth]{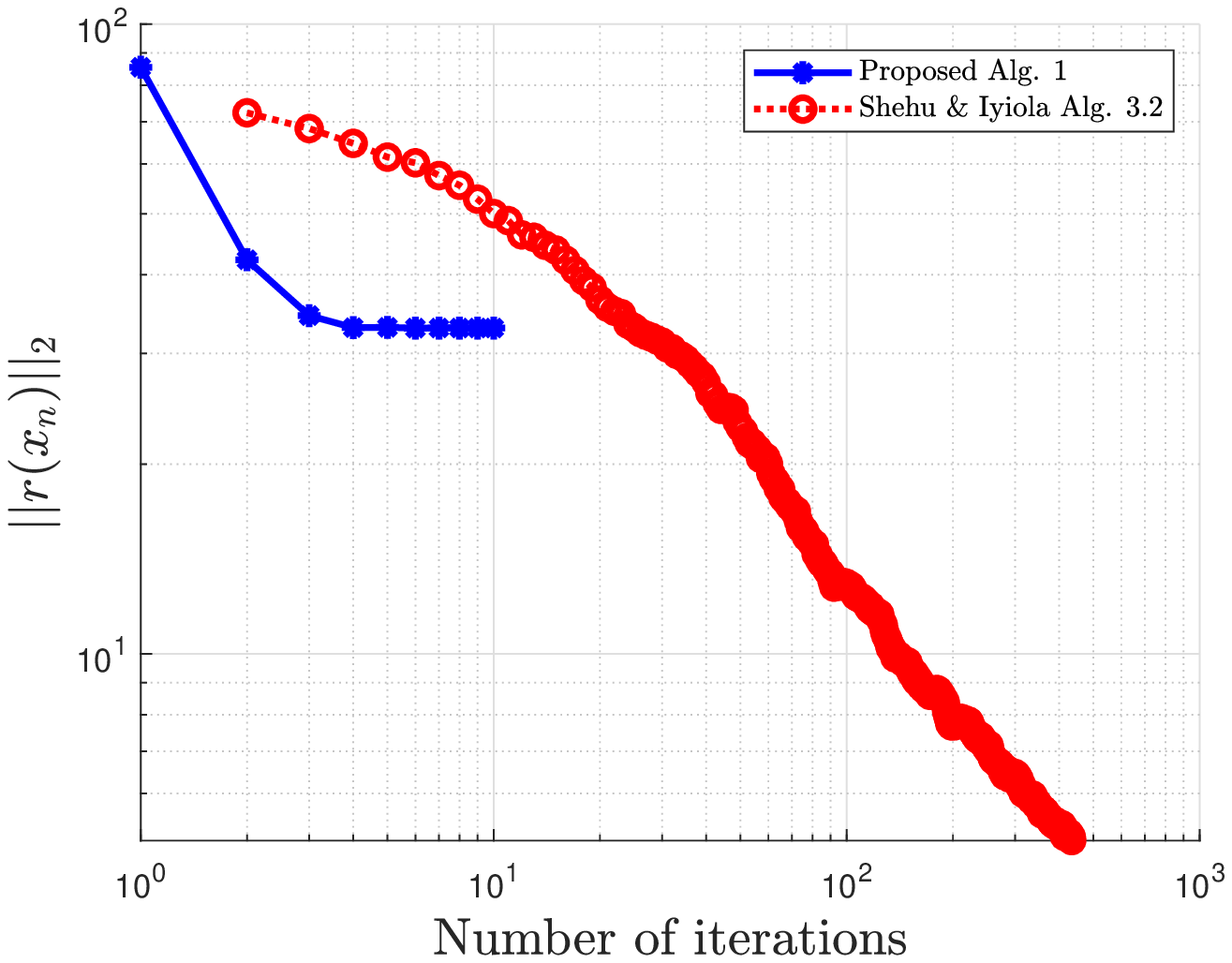}
\caption{Example 5.1: $\gamma = 0.5$, $N = 10$}\label{fig3}
\endminipage\hfill
\minipage{0.50\textwidth}
\includegraphics[width=\linewidth]{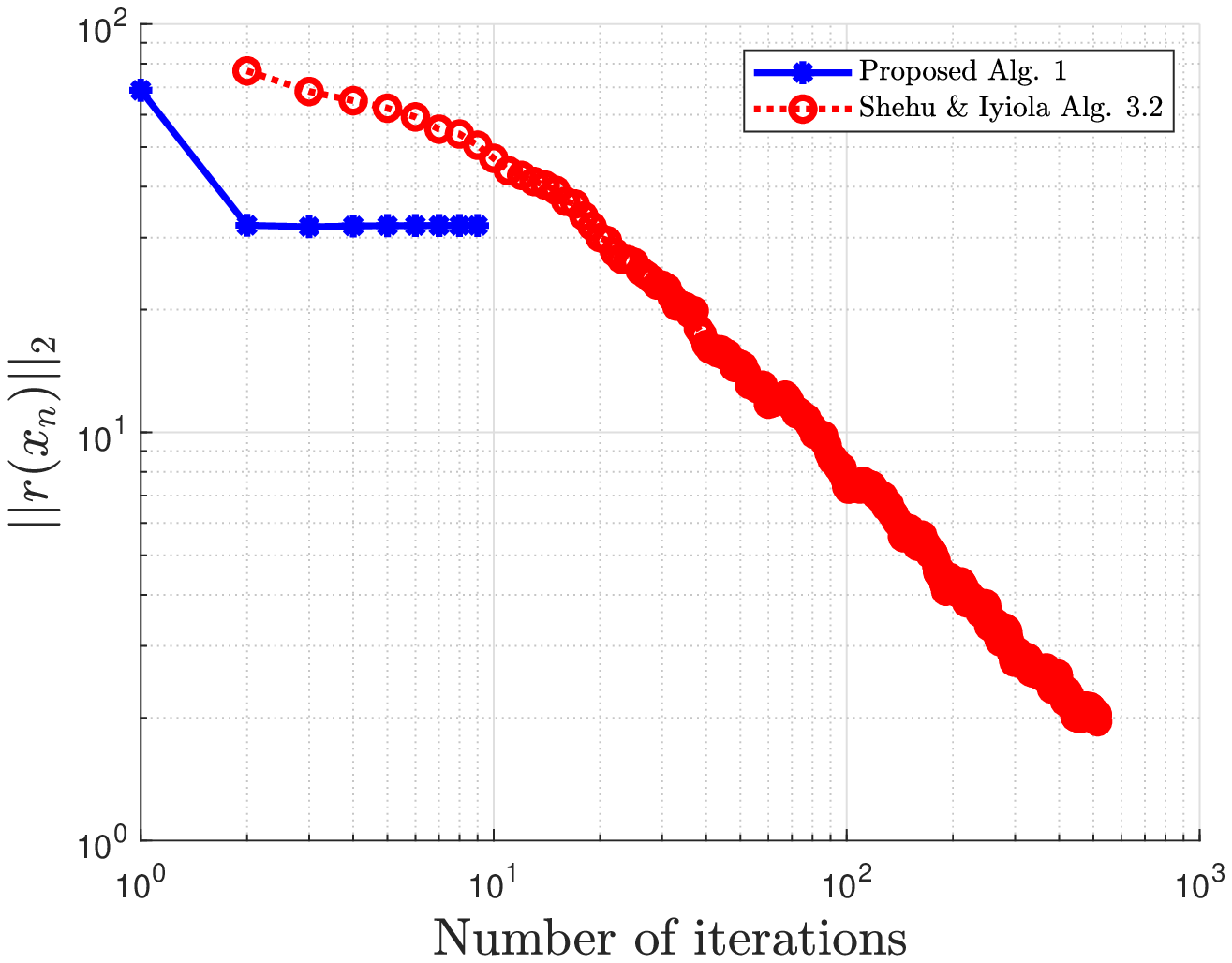}
\caption{Example 5.1: $\gamma = 0.7$, $N = 10$}\label{fig4}
\endminipage
\end{figure}

\begin{figure}[H]
\minipage{0.50\textwidth}
\includegraphics[width=\linewidth]{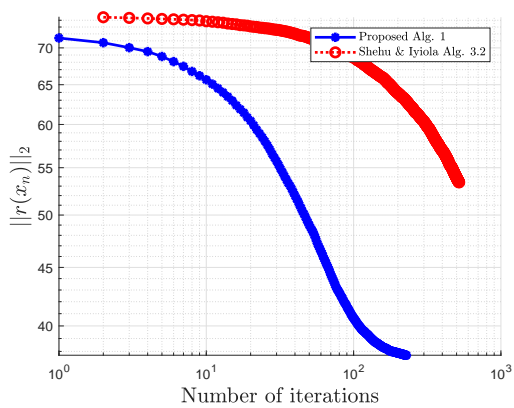}
\caption{Example 5.1: $\gamma = 0.01$, $N = 20$}\label{fig5}
\endminipage\hfill
\minipage{0.50\textwidth}
\includegraphics[width=\linewidth]{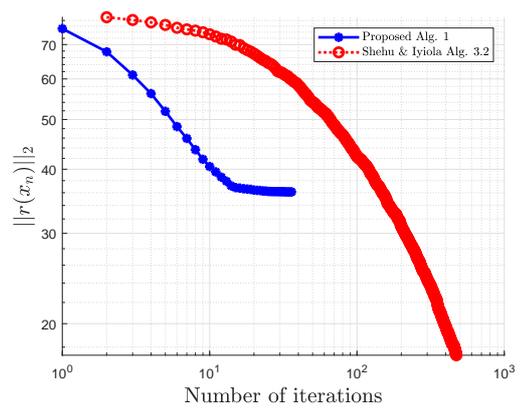}
\caption{Example 5.1: $\gamma = 0.1$, $N = 20$}\label{fig6}
\endminipage
\end{figure}

\begin{figure}[H]
\minipage{0.50\textwidth}
\includegraphics[width=\linewidth]{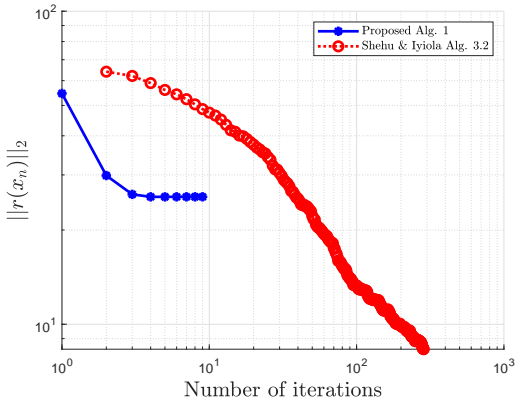}
\caption{Example 5.1: $\gamma = 0.5$, $N = 20$}\label{fig7}
\endminipage\hfill
\minipage{0.50\textwidth}
\includegraphics[width=\linewidth]{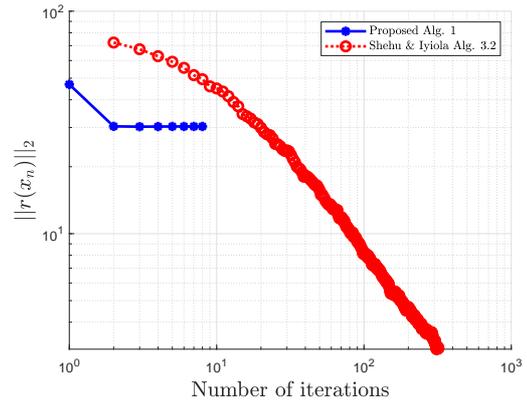}
\caption{Example 5.1: $\gamma = 0.7$, $N = 20$}\label{fig8}
\endminipage
\end{figure}
\end{exm}

\begin{exm}\label{her2}
This example is taken from \cite{Harker} and has been
considered by many authors for numerical experiments (see, for example,
\cite{Hieu, Malitsky, Solodov}). The operator $ A $ is defined by
$ A(x) := Mx+q $, where $ M= BB^T+ S + D $, with $ B, S, D \in
\mathbb R^{m \times m} $ randomly generated matrices such that $ S $ is
skew-symmetric (hence the operator does not arise from an optimization problem), $ D $ is a positive definite diagonal matrix (hence the variational inequality has a unique solution) and $ q = 0 $. The feasible set $ C $ is described by linear inequality constraints $ B x \leq b $ for some random matrix $B \in \mathbb R^{k \times m} $ and a random vector $ b \in \mathbb R^k $ with nonnegative entries. Hence the zero vector is feasible and therefore the unique solution of the corresponding variational inequality. These projections are computed using the MATLAB solver {\tt fmincon}. Hence, for this class of problems, the evaluation of $A$ is relatively inexpensive, whereas projections are costly. We
present the corresponding numerical results (number of iterations and
CPU times in seconds) using six different dimensions $m$ and two different numbers of inequality constraints $k$.\\

\noindent We choose the stopping criterion as $\|x^k\|\leq \epsilon = 0.001.$  The size $k =30, 50$ and $m =  10, 20, 30, 40, 50, 60$. The matrices $B,S,D$ and the vector $b$ are generated randomly. We choose  $\gamma=0.8$, $\sigma = 0.5$, $\alpha_n = 0.2$ in Algorithm \eqref{Alg:AlgL}. In \eqref{pppp3}, we choose $\sigma=0.8$, $\rho=0.1$, $\mu=0.2$. In \eqref{mm2}, we choose $L=\|M\|$. Here, we compare our proposed Algorithm \ref{Alg:AlgL} with the subgradient extragradient method (SEM) \eqref{pppp3}, and the inertial subgradient extragradient method (Thong \& Hieu) \eqref{mm2}. \\

\begin{sidewaystable}
\begin{table}[H]
\caption{Example \ref{her2} Comparison: Proposed Alg. \ref{Alg:AlgL} vs SEM (2) vs Thong \& Hieu (3) (T \& H (3))}
\centering 
\begin{tabular}{c c c c c c c c c c c c c}
\toprule[1.5pt]
         & &  \multicolumn{3}{c}{No. of Iterations} & & \multicolumn{3}{c}{CPU time} & & \multicolumn{3}{c}{Norm sol. ($10^{-3}$)}\\
         \cline{3-5} \cline{7-9} \cline{11-13}
  & $m$ & \textbf{Alg. 1} & \textbf{SEM (2)} & \textbf{T \& H (3)} & &\textbf{Alg. 1} & \textbf{SEM (2)}&\textbf{T \& H (3)} & &\textbf{Alg. 1} & \textbf{SEM (2)} & \textbf{T \& H (3)}\\
\toprule[1.5pt]
 $k=30$ & \textbf{10} & 344 & 3867 & 4123 & & 2.8707 & 31.5956 & 30.3300 & & 0.99605 & 0.99927 & 0.99938\\
\cline{2-13}
& \textbf{20} & 747 & 14683 & 10493 & & 5.6957 & 117.7458 & 84.7406 & & 0.99608 & 0.99996 & 0.99984\\
\cline{2-13}
& \textbf{30} & 1777 & 31668 & 24968 & & 13.891 & 269.2987 & 211.1937 & & 0.99955 & 0.99999 & 0.99980\\
\cline{2-13}
& \textbf{40} & 2612 & 40224 & 36119 & & 21.5972 & 358.3453 & 320.2933 & & 0.99790 & 1.00000 & 0.99994\\
\cline{2-13}
& \textbf{50} & 3710 & 70321 & 51143 & & 32.074 & 655.8354 & 469.0297 & & 0.99981 & 0.99997 & 0.99995\\
\cline{2-13}
 & \textbf{60} & 5619 & 56670 & 50619 & & 50.4537 & 554.3951 & 491.5552 &  & 0.99929 & 0.99992 & 0.99998\\
\toprule[1.5pt]
$k=50$ & \textbf{10} & 200 & 6213 & 5518 & & 1.90869 & 60.1212 & 47.6937 & & 0.98471 & 0.99969 & 0.99945\\
\cline{2-13}
& \textbf{20} & 835 & 14354 & 10372 & & 6.4942 & 126.429 & 96.9197 & & 0.99909 & 0.99980 & 0.99991\\
\cline{2-13}
& \textbf{30} & 1978 & 25519 & 19357 & & 16.4674 & 240.61 & 208.910 & & 0.99794 & 0.99991 & 0.99990\\
\cline{2-13}
& \textbf{40} & 2832 & 47661 & 26790 & & 30.3799 & 539.729 & 314.588 & & 0.99734 & 0.99991 & 0.99938 \\
\cline{2-13}
& \textbf{50} & 3933 & 43773 & 53055 & & 45.8745 & 562.959 & 800.371 & & 0.99985 & 0.99925 & 0.99999\\
\cline{2-13}
& \textbf{60} & 6025 & 97772 & 65820 & & 100.304 & 1515.76 & 589.180 & & 0.99955 & 0.99995 & 0.99994\\
\toprule[1.5pt]

\end{tabular}\label{table2}
\end{table}
\end{sidewaystable}

\begin{figure}[H]
\minipage{0.33\textwidth}
\includegraphics[width=\linewidth]{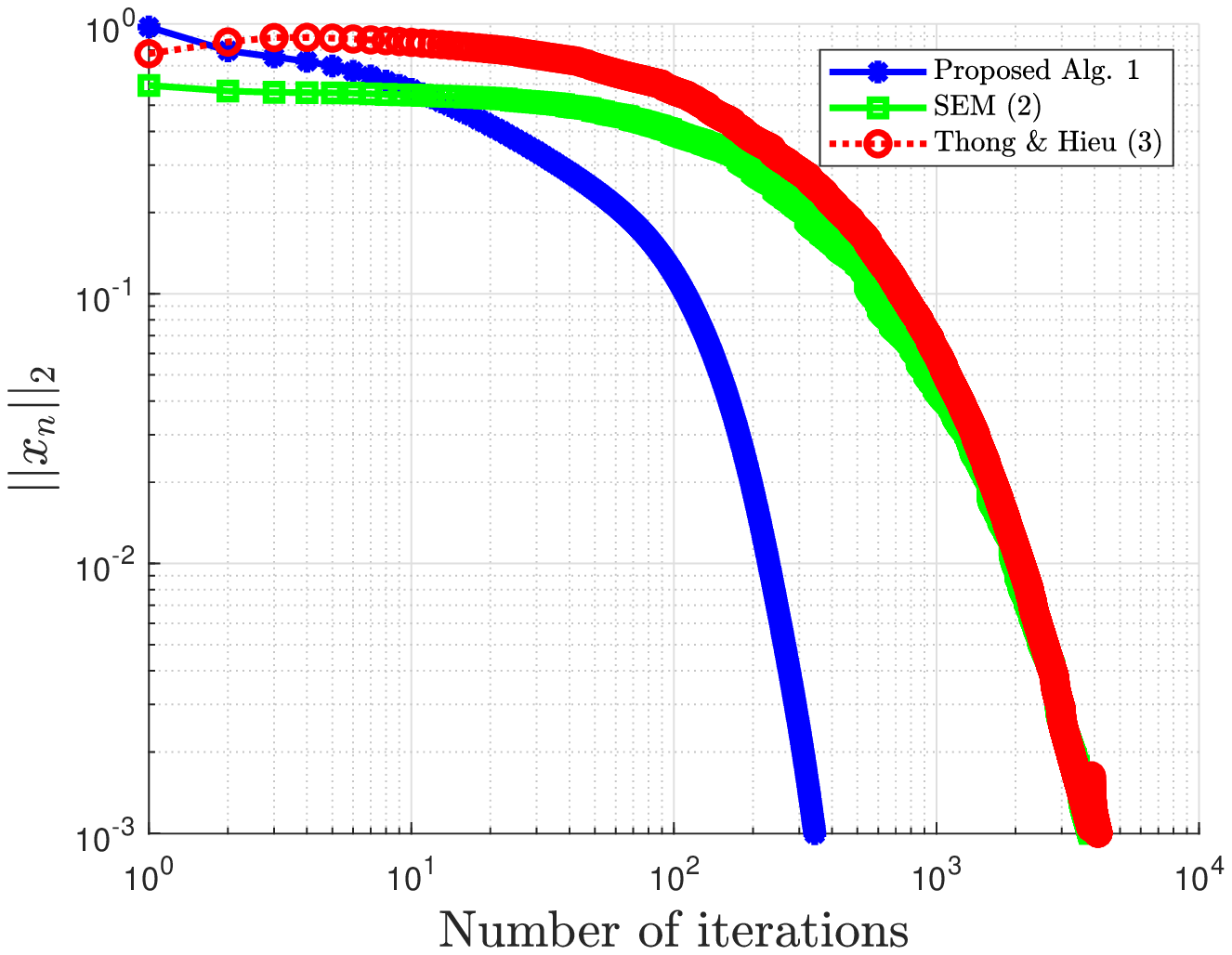}
\caption{Example \ref{her2}: $k = 30$, $m = 10$}\label{fig9}
\endminipage\hfill
\minipage{0.33\textwidth}
\includegraphics[width=\linewidth]{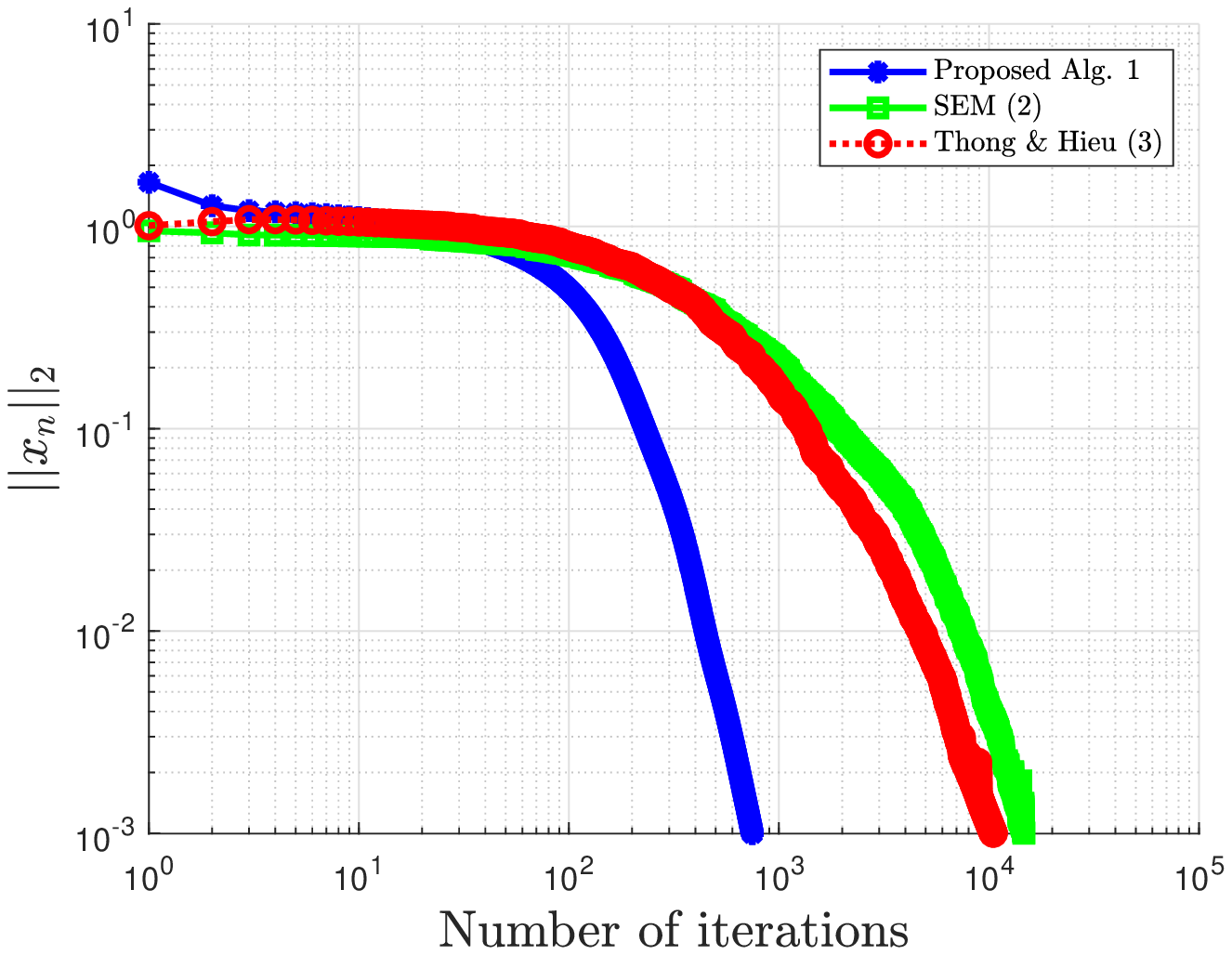}
\caption{Example \ref{her2}: $k = 30$, $m = 20$}\label{fig10}
\endminipage\hfill
\minipage{0.33\textwidth}
\includegraphics[width=\linewidth]{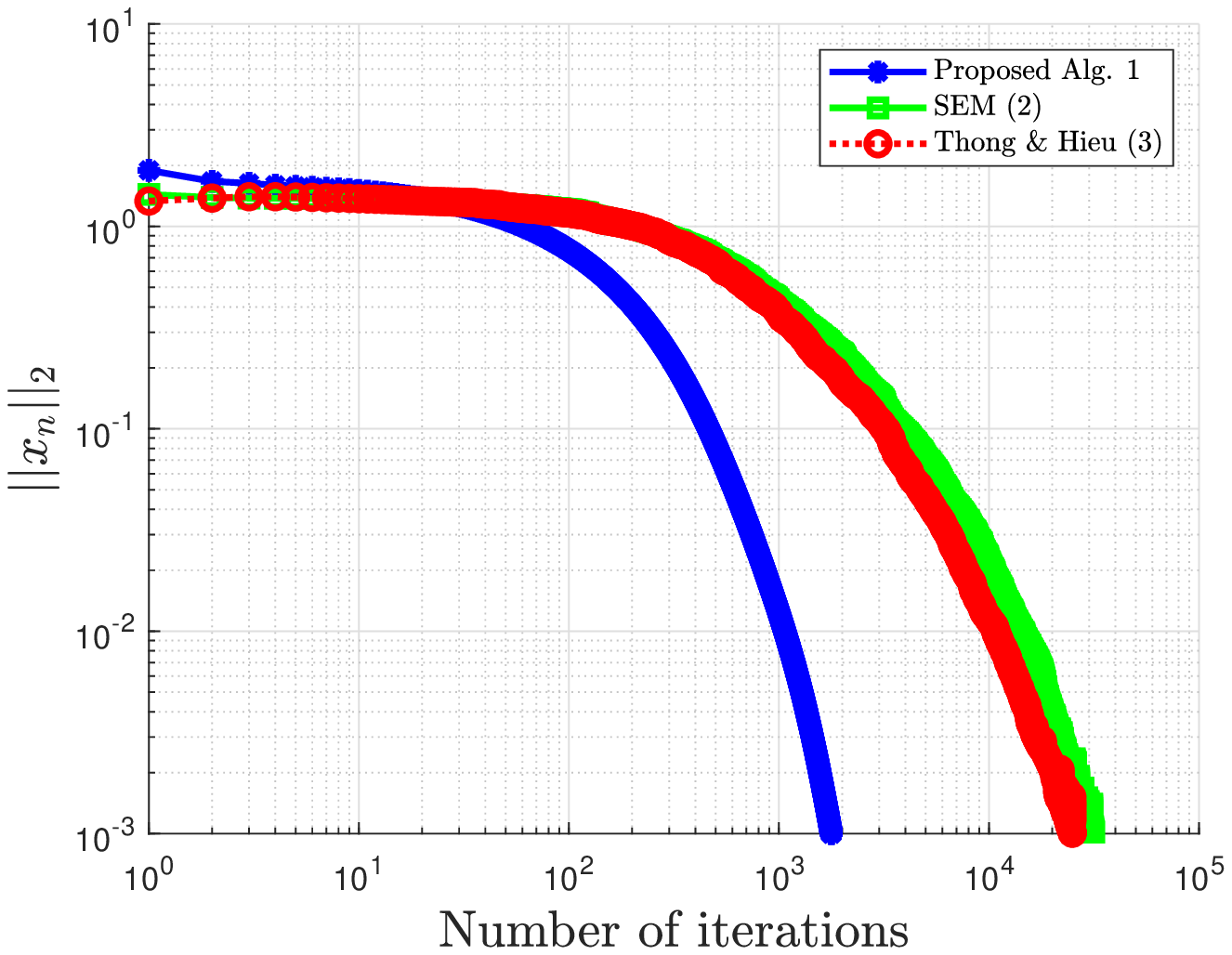}
\caption{Example \ref{her2}: $k = 30$, $m = 30$}\label{fig11}
\endminipage
\end{figure}

\begin{figure}[H]
\minipage{0.33\textwidth}
\includegraphics[width=\linewidth]{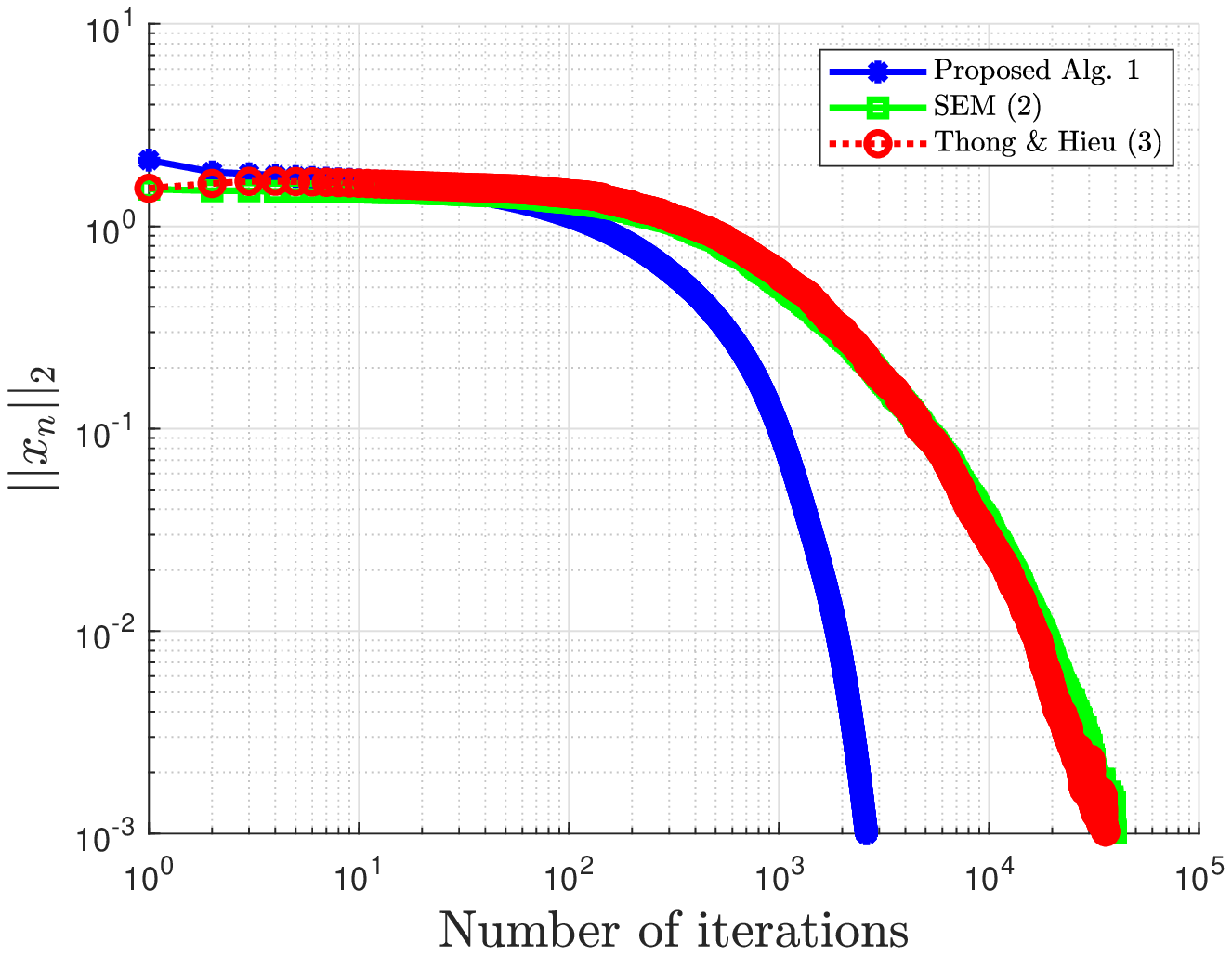}
\caption{Example \ref{her2}: $k = 30$, $m = 40$}\label{fig12}
\endminipage\hfill
\minipage{0.33\textwidth}
\includegraphics[width=\linewidth]{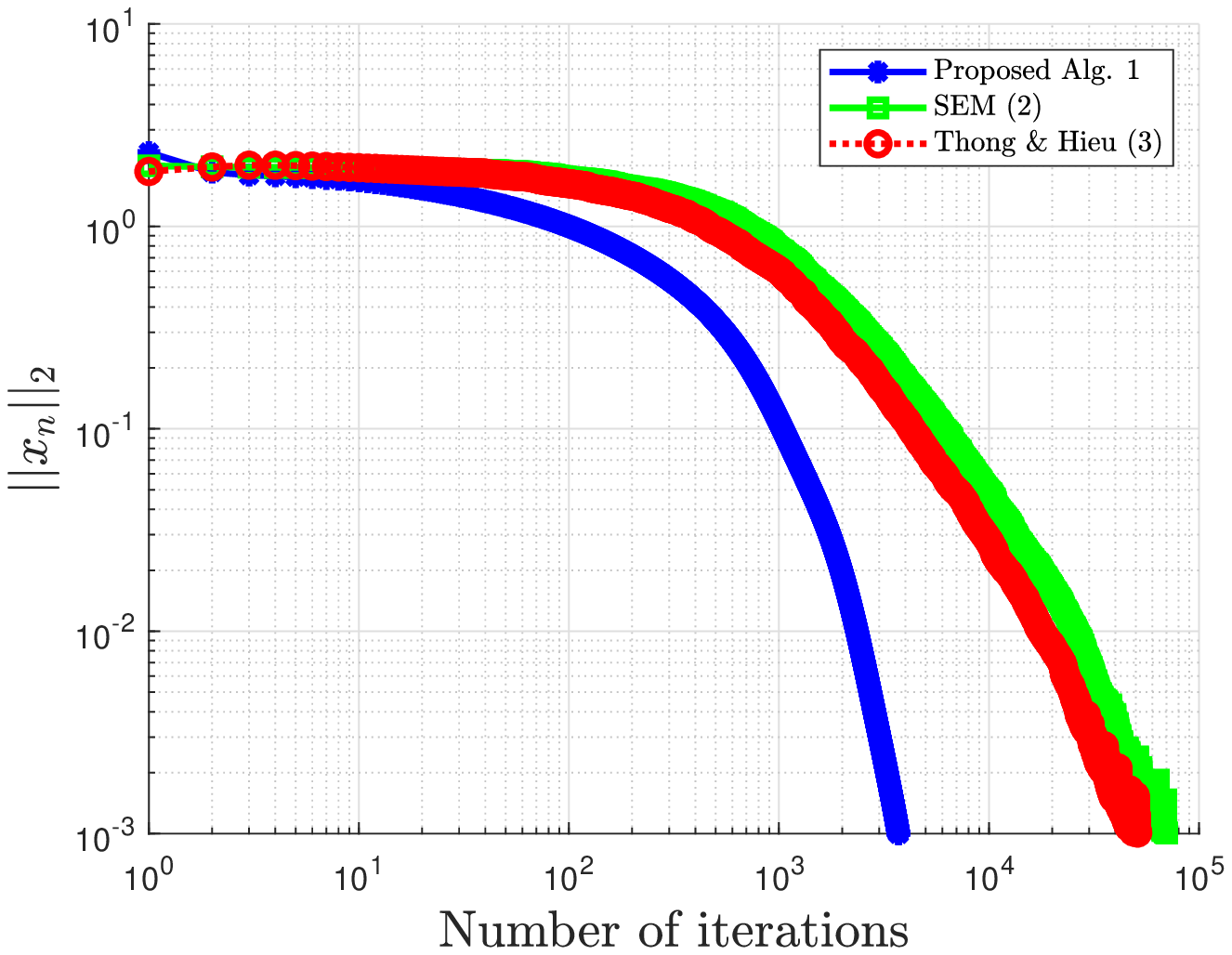}
\caption{Example \ref{her2}: $k = 30$, $m = 50$}\label{fig13}
\endminipage\hfill
\minipage{0.33\textwidth}
\includegraphics[width=\linewidth]{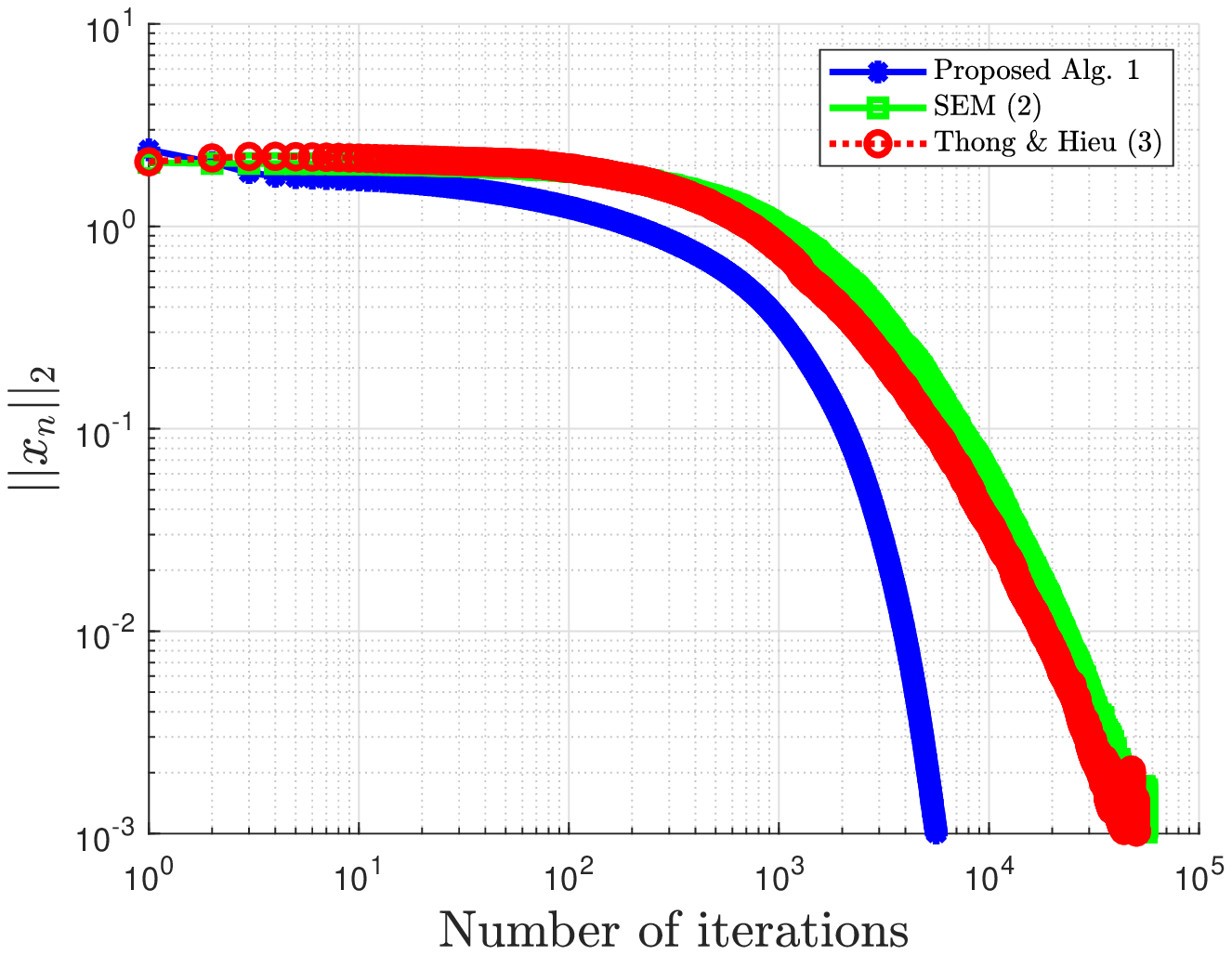}
\caption{Example \ref{her2}: $k = 30$, $m = 60$}\label{fig14}
\endminipage
\end{figure}

\begin{figure}[H]
\minipage{0.33\textwidth}
\includegraphics[width=\linewidth]{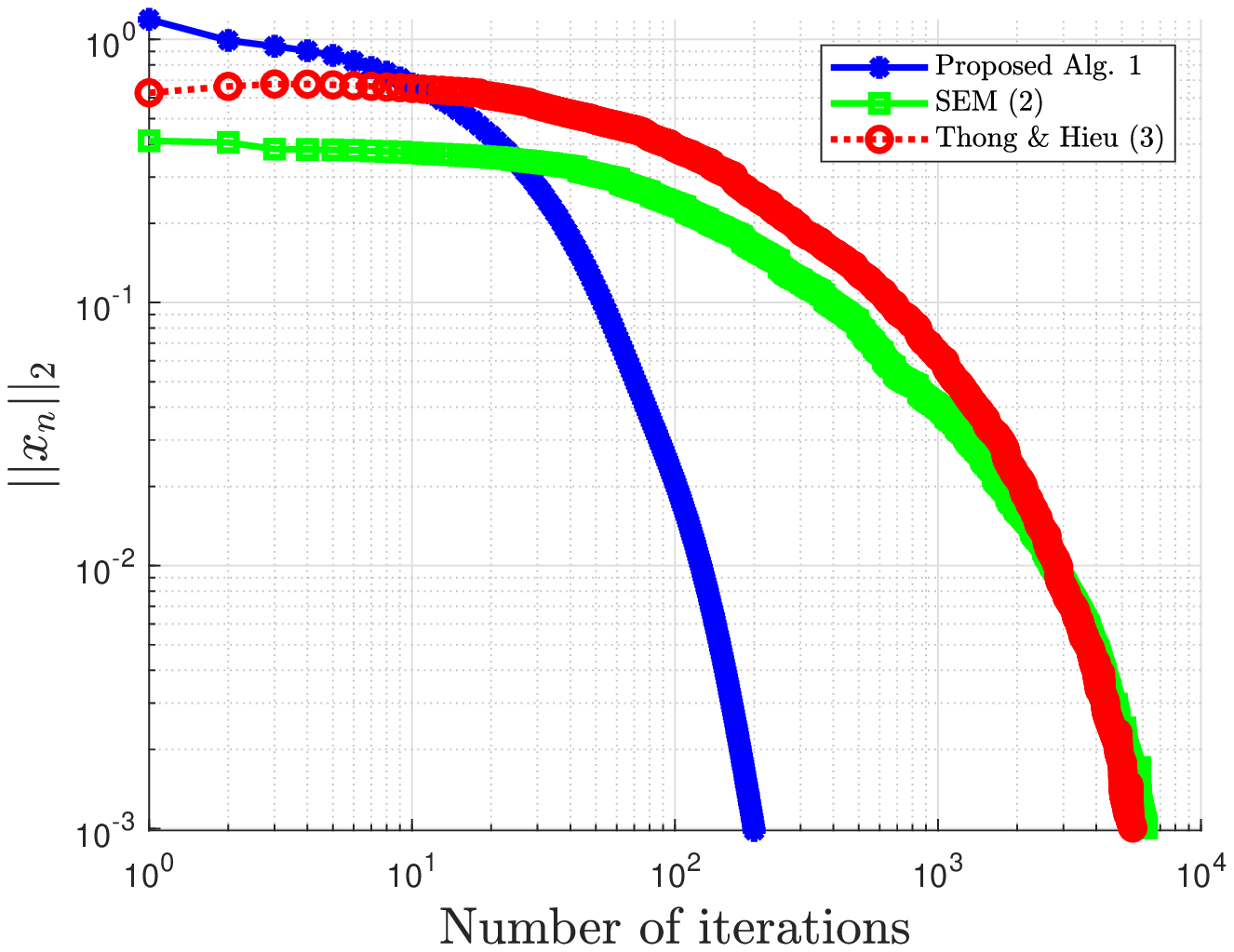}
\caption{Example \ref{her2}: $k = 50$, $m = 10$}\label{fig15}
\endminipage\hfill
\minipage{0.33\textwidth}
\includegraphics[width=\linewidth]{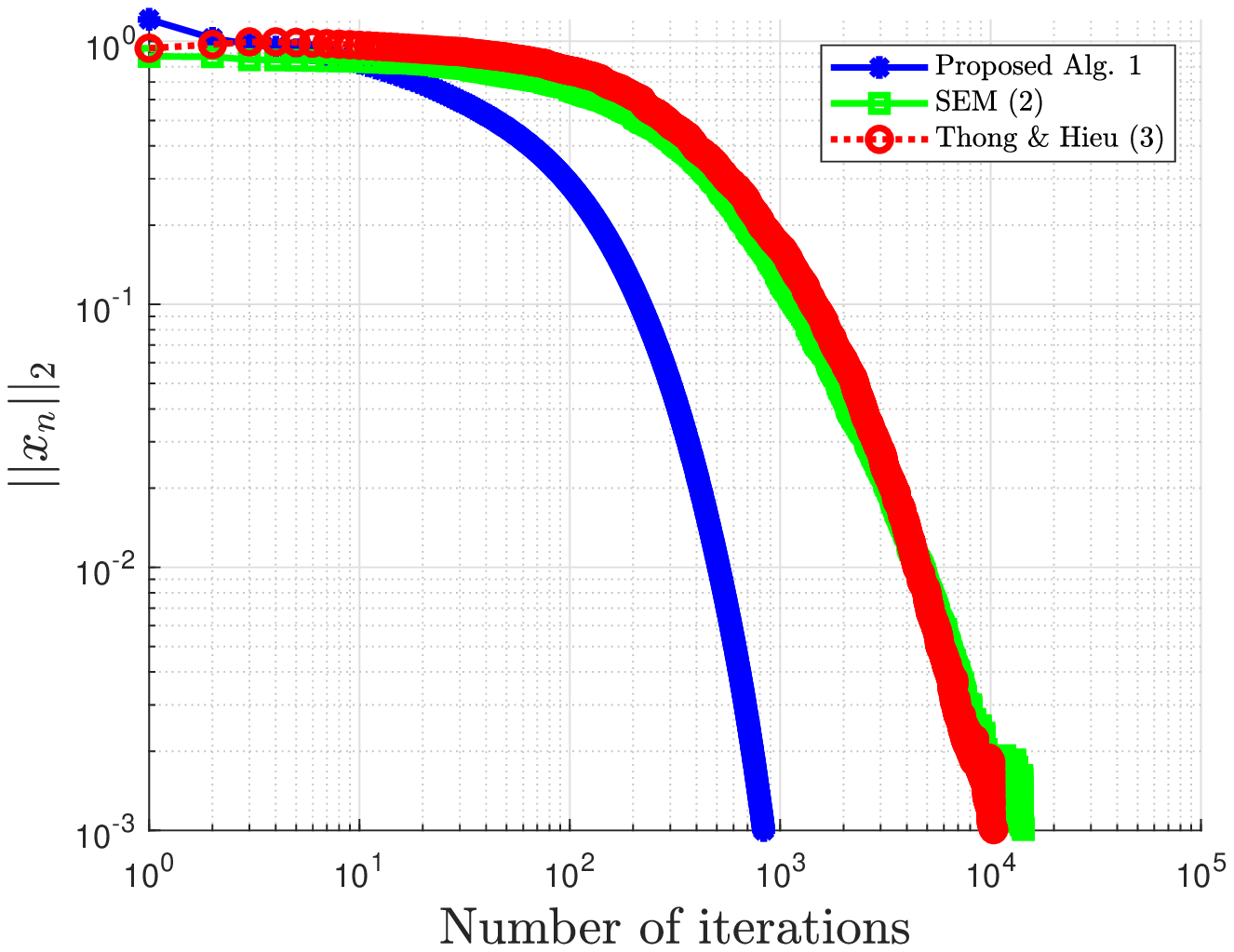}
\caption{Example \ref{her2}: $k = 50$, $m = 20$}\label{fig16}
\endminipage\hfill
\minipage{0.33\textwidth}
\includegraphics[width=\linewidth]{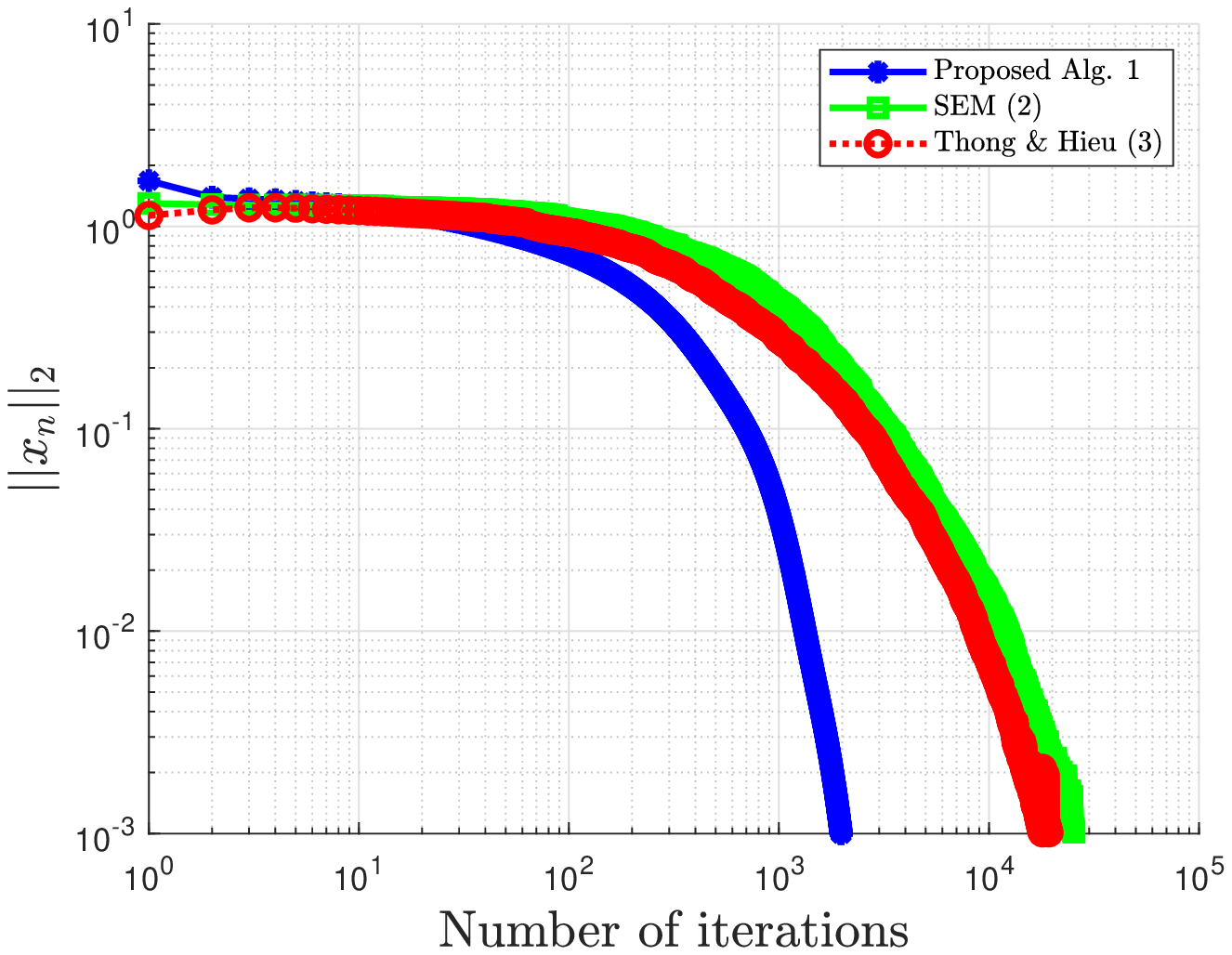}
\caption{Example \ref{her2}: $k = 50$, $m = 30$}\label{fig17}
\endminipage
\end{figure}

\begin{figure}[H]
\minipage{0.33\textwidth}
\includegraphics[width=\linewidth]{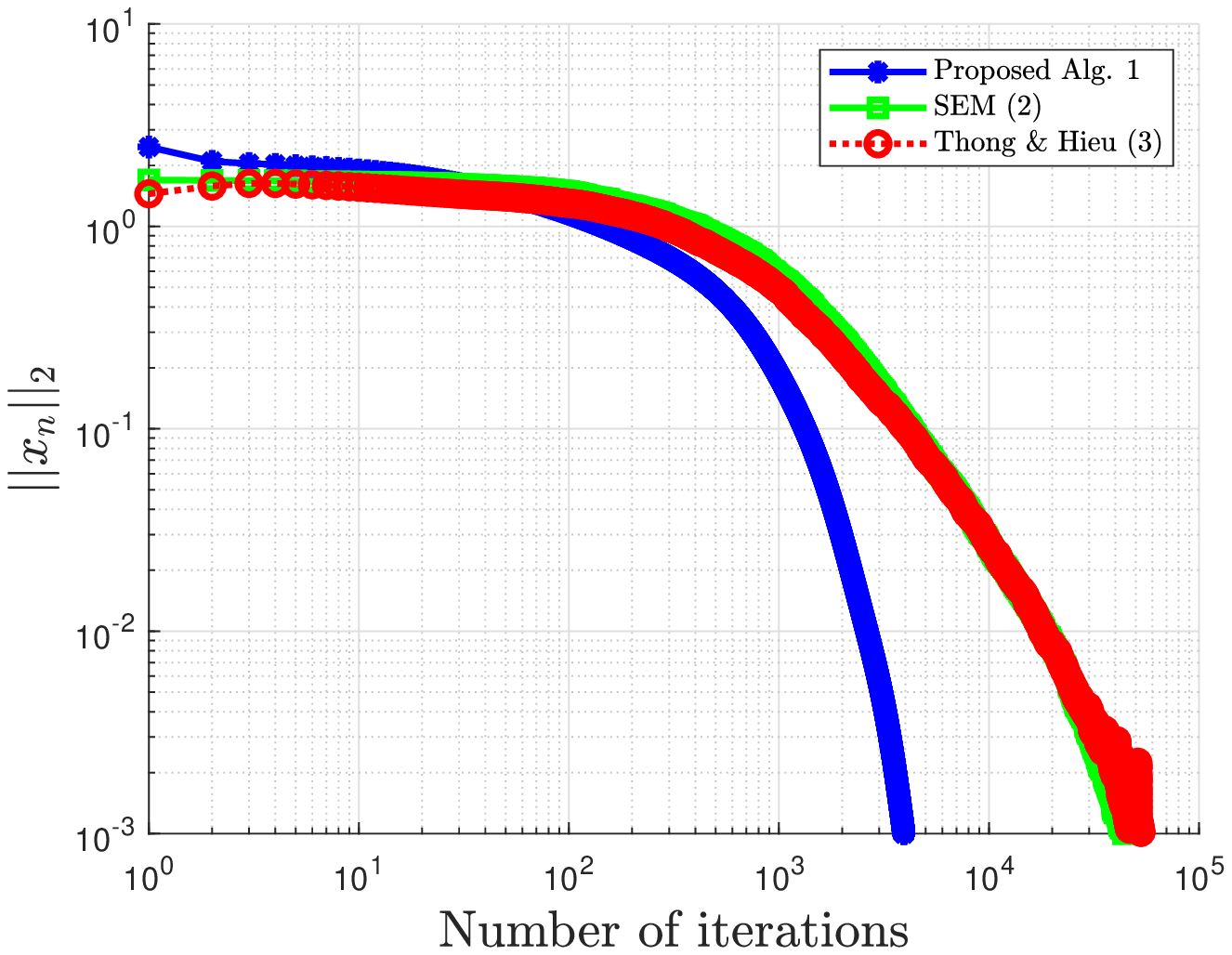}
\caption{Example \ref{her2}: $k = 50$, $m = 50$}\label{fig19}
\endminipage\hfill
\minipage{0.33\textwidth}
\includegraphics[width=\linewidth]{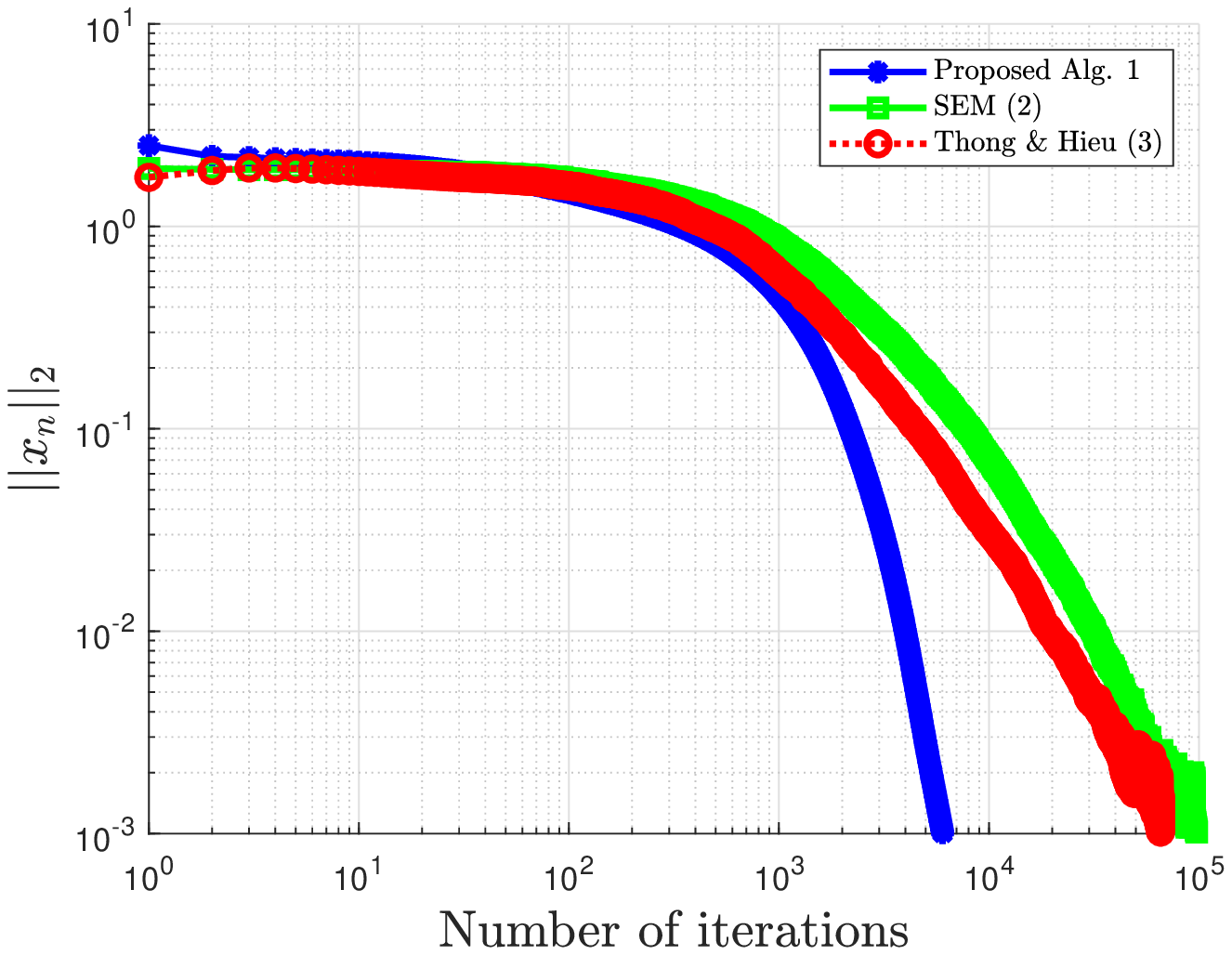}
\caption{Example \ref{her2}: $k = 50$, $m = 60$}\label{fig20}
\endminipage
\end{figure}


Clearly, from both Examples, our proposed algorithm \ref{Alg:AlgL} outperforms and highly improves Shehu and Iyiola Algorithm (3.2) in \cite{ShehuVIM}, subgradient extragradient method (SEM) \eqref{pppp3}, and the inertial subgradient extragradient method (Thong \& Hieu) \eqref{mm2} with respect to number of iterations required and CPU time and achieved norm of
the solution. See Tables \ref{table1} - \ref{table2} and Figures \ref{fig1} - \ref{fig20}.

\end{exm}

\noindent
We give an example in infinite dimensional Hilbert spaces. We give comparison of our proposed Algorithm \ref{Alg:AlgL} with Algorithm~\eqref{pppp3}, Algorithm~\eqref{mm2} and the non-inertial case of Algorithm \ref{Alg:AlgL} (when $\theta_n=0$).

\begin{exm}\label{adede}
Let $H:=L^2([0,1])$ with norm $\|x\|:=\Big( \int_{0}^{1}x(t)^2dt\Big)^{\frac{1}{2}}$ and inner product
$\langle x,y \rangle:=\int_{0}^{1}x(t)y(t)dt,~~x,y \in H$. Let $C:=\{x \in L^2([0,1]):\int_{0}^{1}tx(t)dt= 2\}$. Let us define the Volterra integral operator $F:L^2([0,1])\rightarrow L^2([0,1])$ by $Fx(t):=\int_{0}^{t}x(s)ds,~~x \in L^2([0,1]), t \in [0,1]$. Then, $F$ is monotone, bounded and linear with $L=\frac{2}{\pi}$ (see Exercises 20.12 of \cite{Bauschkebook}) . Observe that $\text{SOL}\neq \emptyset$ since $0 \in \text{SOL}$. Observe that (see \cite{Cegielskibook})
$$
P_C(x)(t):=x(t)-\frac{\int_{0}^{1}tx(t)dt-2}{\int_{0}^{1}t^{2}dt}t, ~~t \in [0,1].
$$
\end{exm}

\section{Final Remarks}\label{Sec:Final}
We propose an inertial projection method for solving variational inequality problem and give weak convergence result. The cost function is assumed to be monotone and non-Lipschitz continuous. Our numerical implementations show that our method is more efficient and outperforms some other related methods in the literature. Our result is more applicable than the results on variational inequality where the Lipschitz constant of the cost function is needed. Our future project is focused on how to extend the range of inertial factor $\alpha_n$ beyond $1/3$ and extend our results to infinite dimensional Banach spaces.

\section*{Acknowledgments}
The authors are grateful to the anonymous referee and editor whose insightful comments and suggestions improve the earlier version of this paper. 

\section*{Disclosure statement}
No potential conflict of interest was reported by the author(s).

\section*{Funding}
The project of the first author has received funding from the European Research Council (ERC) under the European Union’s Seventh Framework Program (FP7 - 2007-2013) (Grant agreement No. 616160).

\end{document}